\theoremstyle{plain}
\newtheorem{theorem}{Theorem}
\newtheorem{corollary}{Corollary}
\newtheorem{lemma}{Lemma}
\theoremstyle{definition}
\theoremstyle{remark}
\numberwithin{equation}{section}
\newenvironment{theorem*}[1][Theorem.]{\begin{trivlist}
\item[\hskip \labelsep {\bfseries #1}]}{\end{trivlist}}
\newcommand{\R}{\mathbb R}
\newcommand{\Z}{\mathbb Z}
\newcommand{\C}{\mathbb C}
\newcommand{\Rn}{\mathbb R^n}
\begin{document}
\title[Unique continuation]{A Theorem of Paley-Wiener Type for  Schr\"odinger Evolutions}
%%%%%%%%%%%%%%%%%%
%Author information
%%%%%%%%%%%%%%%%%%%%%%
\author{C. E. Kenig}
\address[C. E. Kenig]{Department of Mathematics\\University of Chicago\\Chicago, Il.
60637 \\USA.}
\email{cek@math.uchicago.edu}
\thanks{The first and second  authors are supported  by NSF grants DMS-0968472 and
DMS-1101499 respectively. The third author is supported by MEC grant
MTM2004-03029}
%\thanks{}
%%%%%%%%%%%%%%%%%%%%%%
\author{G. Ponce}
\address[G. Ponce]{Department of Mathematics\\ University of California\\ Santa
Barbara, CA 93106\\ USA.}
\email{ponce@math.ucsb.edu}
%\thanks{}
%%%%%%%%%%%%%%%%%%%%%%
\author{L. Vega}
\address[L. Vega]{UPV/EHU\\Dpto. de Matem\'aticas\\Apto. 644, 48080 Bilbao, Spain.}
\email{luis.vega@ehu.es}
%\thanks{}
%%%%%%%%%%%%%%%%%%%%%%
\keywords{Schr\"odinger evolutions, unique continuation}
\subjclass{Primary: 35Q55}
%\date{}
%\dedicatory{}
%%%%%%%%%%%%%%
\begin{abstract}
We prove unique continuation principles for solutions of  evolution Schr\"odinger equations with 
time dependent potentials. These  correspond  to uncertainly principles of Paley-Wiener type for the  
Fourier transform. Our results extends to a large class of semi-linear
Schr\"odinger equation. 
\end{abstract}
%\begin{document}
\maketitle

%\begin{abstract}

%\end{abstract}
%%%%%%%%%%%%%%%%%%
\section{Introduction}\label{S: Introduction}

In this paper we  study unique continuation properties of
solutions of Schr\"odinger equations of the form
\begin{equation}
\label{E:1.1}
\partial_t u = i( \triangle u + V(x,t) u),\;\;\;\;\;\; (x,t)\in \R^n\times
[0,T], \;\;\;\;\;\;T>0.
\end{equation}

The goal is to obtain sufficient conditions on the behavior of the
solution $u$
at two different times and on the potential $V$ which guarantee that
$u\equiv 0$ in $\Rn\times [0,T]$.
Under appropriate assumptions this result will extend to the difference $v=u_1-u_2$ of two solutions $u_1,\;u_2$ of
semi-linear
Schr\"odinger equation
\begin{equation}
\label{E: NLS}
\partial_t u =i( \triangle u + F(u,\overline u)),
\end{equation}
from which one can conclude that $u_1\equiv u_2$.
\vskip.1in

Defining the Fourier transform of a function $f$ as 
$$
\widehat f(\xi)=(2\pi)^{-n/2} \int_{\R^n} e^{-i\xi\cdot x} f(x)dx.
$$
one has 
\begin{equation}
 \label{formula-1}
\begin{aligned}
u(x,t)&= e^{it\Delta}u_0(x)= \int_{\R^n} \frac{e^{i|x-y|^2/4t}}{(4\pi i t)^{n/2}}\,
u_0(y)\,dy\\
\\
&=\frac{e^{i|x|^2/4t}}{(4\pi i t)^{n/2}} \int_{\R^n}e^{-2ix\cdot y/4t} e^{i|y|^2/4t}
u_0(y)\,dy\\
\\
&= \frac{e^{i|x|^2/4t}}{(2 i t)^{n/2}}\;
\widehat{\;(e^{i|\cdot|^2/4t}u_0)\,}\left(\frac{x}{2 t}\right),
\end{aligned}
\end{equation}
where $e^{it\Delta}u_0(x)$ denotes  the 
free solution of the
Schr\"odinger equation with data $u_0$
$$
\partial_tu=i \triangle u,\;\;\;\;\;\;u(x,0)=u_0(x),\;\;\;\;(x,t)\in\Rn\times \R.
$$
The identity \eqref{formula-1} tells us that this kind of results for the free solution of the
Schr\"odinger equation are closely related to uncertainty principles for the Fourier transform. In 
this regard, one has the well
known result of G. H. Hardy \cite{Hardy}:
$$
\aligned
&\text{If}\;\;f(x)=O(e^{-x^2/\beta^2}), \;\;\widehat f(\xi)=O(e^{-4\,\xi^2/\alpha^2})\;\;\text{and}\;\;
\alpha\beta<4,\;\;\text{ then}\;\; f\equiv 0,\\
& \;\;\text{and}\;\;\text{if}\;\; \alpha\beta=4,\;\;\text{then}\;\;f(x)=c\,e^{-x^2/\beta^2}.
 \endaligned
 $$ 
Its extension to higher dimensions $n\geq 2$ was obtained  in \cite{SST}. 
The following generalization in terms of the $L^2$-norm  was established  in \cite{CoPr}:
$$
\text{If}\;\;\;\;e^{\frac{|x|^2}{\beta^2}}\,f(x),\;\;e^{\frac{4|\xi|^2}{\alpha^2}}\widehat
f(\xi)\in L^2(\mathbb R^n),\;\;\text{and}\;\;\alpha\,\beta\leq
4,\;\text{then}\;f\equiv 0.
$$

In terms of the free solution of the Schr\"odinger equation the
$L^2$-version of Hardy Uncertainty Principle
says :
\begin{equation}
\label{hardy-l2}
\text{If}\;\;
\;e^{\frac{|x|^2}{\beta^2}}\,u_0(x),\;\;e^{\frac{|x|^2}{\alpha^2}}\,e^{it\Delta}\,u_0(x)\in
L^2(\mathbb R^n),\;\;\text{and}\;\;\alpha\,\beta\leq
4t,\;\text{then}\;u_0\equiv 0.
\end{equation}

In \cite{EKPV09}  the following result was proven:
\vskip.1in

\begin{theorem*}(\cite{EKPV09})
%\label{Theorem A1}
Given  any solution  $u \in C([0,T] :L^2(\Rn))$ of
\begin{equation}
\label{ivp11}
\partial_tu=i\left(\triangle u+V(x,t)u\right),\,\;\;\;\;\;\,\;\;\;(x,t)\in \Rn\times [0,T],
\end{equation}
with  $V\in L^{\infty}(\R^n\times [0,T])$,
\begin{equation}
\label{14a}
\lim_{\rho\rightarrow +\infty}\|V\|_{L^1([0,T] : L^\infty(\R^n\setminus B_{\rho}))}=0.
\end{equation}
and
$$
e^{\frac{|x|^2}{\beta^2}}\,u_0,\;\;\;e^{\frac{|x|^2}{\alpha^2}}\,e^{i T \Delta}u_0\in
L^2(\mathbb R^n),
$$
with $\alpha\,\beta<4 T$,
then $\,u_0\equiv 0$.
\end{theorem*}
\vskip.1in

Notice that the above Theorem recovers the $L^2$-version of the Hardy
Uncertainty Principle \eqref{hardy-l2} for solutions of the IVP
\eqref{ivp11},
except for the limiting case $\alpha\,\beta=4T$  for which the
corresponding result was proven to fail, see  \cite{EKPV09}.
%%%%%%%%%%%%%%%%%%
Also one has  the uncertainty principle of the type first studied by  G. W. Morgan in \cite{Mo}. 
The following result was proven in  \cite{Ho} for the one dimensional case $n=1$ and extended to higher 
dimension in \cite{bonamie2} and  \cite{ray}: 
$$
\text{If} \;\;f\in L^1(\mathbb R^n)\cap L^2(\mathbb R^n)\;\;\;\text{and}\;\;\int_{\mathbb R^n} \int_{\mathbb R^n} |f(x)| |\widehat f(\xi)| e^{
|x\,\cdot \xi|}\,dx\,d\xi<\infty, \;\;\text{then}
\;\;f\equiv 0.
$$

In particular, using Young's inequality  this  implies :
\vskip.05in
If 
$f\in L^1(\mathbb R^n)\cap L^2(\mathbb R^n)$, $\;p\in(1,2)$, $\,1/p+1/q=1$, $\,\alpha,\,\beta>0$, and 
\begin{equation}
\label{primera}
 \int_{\mathbb R^n}|f(x)|\, e^{\frac{\alpha^p|x|^p}{p}}dx \,+
\,\int_{\mathbb R^n} |\widehat
f(\xi)| \,e^{\frac{\beta^q|\xi|^q}{q}}d\xi<\infty,\;\;\alpha \beta\geq 1\;\Rightarrow \;f\equiv 0,
 \end{equation}
 or in terms of the solution of the free Schr\"odinger
equation :
\vskip.05in
If $u_0\in L^1(\mathbb R^n)\cap L^2(\mathbb R^n)$  and for
some $\,t\neq 0$
 \begin{equation}
 \label{pq}
 \int_{\mathbb R^n}\;|u_0(x)|\, e^{\frac{\alpha^p|x|^p}{p}}dx \,+
\,\int_{\mathbb R^n}\,
|\,e^{it\Delta}u_0(x)|
\,e^{\frac{\beta^q|x|^q}{q(2t)^q}}dx<\infty,\;\;\;
\alpha\,\beta\geq 1,
\end{equation}
then $\;u_0\equiv 0$. 

 In the one dimensional case $n=1$ the above condition  $\alpha\beta\geq 1$ 
 can be replaced by an optimal one  $\;\alpha\,\beta>|\cos(p\pi/2)|^{1/p}\;$ as was established in   \cite{Mo}, \cite{monki}, \cite{bonamie2}.
 \vskip.05in
 In \cite{EKPV08m}  the following result was obtained :
\vskip.1in
\begin{theorem*} (\cite{EKPV08m})
%\label{TheoremA2}
Given $\,p\in(1,2)$ there exists $N_p>0$ such that if
\newline $u\in C([0,1]:L^2(\mathbb R^n))$ is a solution of
$$
\partial_t u=i (\Delta u +V(x,t)u),\;\;\;\;\;\;\;\;\;\;(x,t)\in\R^n\times[0,1],
$$
such that  $V\in L^{\infty}(\R^n\times [0,1])$,
$$
\lim_{\rho\rightarrow +\infty}\|V\|_{L^1([0,1] : L^\infty(\R^n\setminus B_{\rho}))}=0,
$$
and there exist $\,\alpha,\,\beta>0$
\begin{equation}
\label{con1}
\int_{\mathbb R^n}
|u(x,0)|^2e^{2\,\alpha^p\,|x|^p/p}dx\;\,\,+\,\,\;\int_{\mathbb
R^n}
|u(x,1)|^2e^{2\,\beta^q\,|x|^q/q}dx<\infty,
\end{equation}
with $\,1/p+1/q=1$ and 
\begin{equation}
\label{conditionp2}
\;\alpha\,\beta \geq N_p,
\end{equation}
then $\;u\equiv 0$.
\end{theorem*}
%%%%%%%%%%%%%%%%%%%%%%%%%%%%%%%%

 Some previous results concerning uniqueness
properties of solutions
of the Schr\"odinger equation  were not directly motivated by the formula 
\eqref{formula-1}.

 For solutions $u=u(x,t)$ of the $1$-D cubic Schr\"odinger equation
 \begin{equation}
 \label{cubicNLS}
 \partial_t u=i(\partial_x^2 u\pm |u|^2u),
\end{equation}
 B. Y. Zhang \cite{BZ} showed :
 \vskip.05in 
  If  $u(x,t)=0$ for $(x,t)\in (-\infty, a)\times \{0,1\}\,\, ($or $(x,t)\in
(a,\infty)\times \{0,1\})\,$ for some 
$\,a\in \R$, then $u\equiv 0$.
\vskip.03in
The proof is based on the inverse scattering method, which uses the fact that the
equation in \eqref{cubicNLS} is a completely integrable model. 
\vskip.05in
In \cite{KPV02}, under general assumptions on $F$ in \eqref{E: NLS}, it was proven that :
 \vskip.05in
If $u_1,\,u_2\in C([0,1]:H^s(\R^n))$, with $\,s>\max \{n/2; \,2\}\, $ are
solutions of the equation \eqref{E: NLS} with
$F$ as in \eqref{E: NLS} such that
$$
u_1(x,t)=u_2(x,t),\;\;\;(x,t)\in \Gamma^c_{x_0}\times \{0,1\},
$$
where $ \Gamma^c_{x_0}$ denotes the complement of a cone  $\Gamma_{x_0}$ with vertex
$x_0\in \R^n$ and opening $<180^0$, 
then
$u_1\equiv u_2$.

For further results in this direction see \cite{Iza}, \cite{KPV02}, \cite{IK04}, \cite{IK06},
and 
references therein.  Note that in \cite{EKPV12} a unified approach was given to both kinds of results, using 
Lemma \ref{Lemma3} and Corollary \ref{Corollary1} below.
\vskip.05in

Returning to the uncertainty principle for the Fourier transform one has :
 \vskip.05in
 If $f\in L^1(\R^n)$ is non-zero and has compact support, then
$\,\widehat f $
cannot satisfy a condition of the type
$\widehat f(y)=O(e^{-\epsilon |y|})$ for any $\epsilon>0$. 
 \vskip.05in
This is due to the fact that $\widehat f(y)=O(e^{-\epsilon |y|})$ implies that $f$ has an analytic extension 
to the strip $\{z\in\C^n\,:\,| Im (z)|<\epsilon\}$. 
However, it may be possible to have  $ f\in L^1(\R^n)$ a non-zero function with
compact support, such that
$\widehat f(\xi)=O(e^{-\epsilon(y) |y|})$, $\epsilon(y)$ being a positive function
tending to zero as 
$|y|\to \infty$. In the one-dimensional case ($n=1$) A. E.
Ingham \cite{In} 
proved the following :
 \vskip.05in
There exists $f\in L^1(\R)$ non-zero, even, vanishing outside an interval such
that
$\widehat f(y)=O(e^{-\epsilon(y) |y|})$ with $\epsilon(y)$ being a positive
function tending to zero at infinity
if and only if
$$
\int^{\infty}_a \frac{\epsilon(y)}{y}\,dy<\infty,\;\;\;\;\;\;\;\;\;\text{for some}\;\;\;\;\;\;a>0.
$$
\vskip.1in

 In this regard the  Paley-Wiener Theorem \cite{PW} gives a characterization of
a function 
or distribution with compact support in term of the analyticity properties of its
Fourier transform.

 Our main result in this work is the following:
 
\begin{theorem}\label{Theorem1} Let  
 $u\in C([0,1]:L^2(\mathbb R^n))$ be a strong solution of the equation
 \begin{equation}
 \label{maineq}
 \partial_t u= i(\Delta u+V(x,t)u),\;\;\;\;\;\;\;(x,t)\in\R^n\times [0,1].
 \end{equation}
 Assume that
 \begin{equation}
 \label{cond1}
 \sup_{0\leq t\leq 1}\,\int_{\R^{n}}|u(x,t)|^2dx\leq A_1,
 \end{equation}
 \begin{equation}
\label{cond2}
\int_{\R^n}\,e^{2a_1|x_1|}\,|u(x,0)|^2\,dx\leq A_2,\;\;\;\;\text{for some}\;\;a_1>0,
\end{equation}
\begin{equation}
\label{cond3}
supp\,u(\cdot,1)\subset\{x\in\R^n\,:\,x_1\leq a_2\},\;\;\;\;\text{for some} \;\;a_2<\infty,
\end{equation}
with
\begin{equation}
\label{cond4}
 V\in L^{\infty}(\R^n\times [0,1]),\;\;\;\;\;\|V\|_{L^{\infty}(\R^n\times [0,1])}=M_0,
\end{equation}
and
\begin{equation}
\label{cond5}
\lim_{\rho\rightarrow +\infty}\|V\|_{L^1([0,1] : L^\infty(\R^n\setminus B_{\rho}))}=0.
\end{equation}
Then $\,u\equiv 0$.
\end{theorem}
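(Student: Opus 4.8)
\emph{Outline.} The plan is to transplant to the perturbed equation the elementary argument that settles the free case $V\equiv0$, replacing its use of the Fourier transform and of analytic continuation by the Carleman/convexity estimates of Lemma \ref{Lemma3} and Corollary \ref{Corollary1}. When $V\equiv0$, \eqref{formula-1} gives $u(x,1)=c\,e^{i|x|^2/4}\,\widehat g(x/2)$ with $g(y)=e^{i|y|^2/4}u(y,0)$; since $|g|=|u(\cdot,0)|$, condition \eqref{cond2} says that $\xi_1\mapsto\widehat g(\xi_1,\xi')$ extends analytically to the strip $|\mathrm{Im}\,\xi_1|<a_1$, while \eqref{cond3} says that $\widehat g$ vanishes on the open ray $\{\xi_1>a_2/2\}\subset\R$. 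As the strip is connected, the identity theorem forces $\widehat g\equiv0$, hence $u\equiv0$. This argument genuinely uses the \emph{support} hypothesis \eqref{cond3} and would fail under mere fast decay at $t=1$, consistently with the absence of an uncertainty principle at the endpoint $p=1$. The proof must therefore manufacture, in the presence of $V$, substitutes both for ``$\widehat g$ analytic in a strip'' and for ``the identity theorem''.

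\emph{Reductions and propagation of decay.} Translating in $x_1$ we may take $a_2=0$, so $u(\cdot,1)$ vanishes on the nonempty open set $\{x_1>0\}$, and \eqref{cond3} then gives $e^{\lambda x_1}u(\cdot,1)\in L^2(\R^n)$ with norm $\le\sqrt{A_1}$ for \emph{every} $\lambda\ge0$. Since $V\in L^\infty$, $L^2$-solutions of \eqref{maineq} are unique forward and backward in time (Duhamel plus Gronwall), so it suffices to prove $u(\cdot,1)\equiv0$. Next, applying the estimates of Lemma \ref{Lemma3} and Corollary \ref{Corollary1} to the family of linear weights $\varphi_\lambda(x)=\lambda x_1$ — treating $Vu$ as an inhomogeneous term whose weighted norm is controlled by \eqref{cond4}--\eqref{cond5}, and using truncated weights and a limiting argument to justify the finiteness of weighted norms at intermediate times — one obtains, with $C=C(A_1,A_2,M_0,n)$,
\[
\sup_{0\le t\le1}\int_{\R^n}e^{2\lambda x_1}|u(x,t)|^2\,dx\le C\qquad(0\le\lambda\le a_1),
\]
the endpoint data being supplied by \eqref{cond2} at $t=0$ and by the previous paragraph at $t=1$. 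This is the analogue of ``$\widehat g$ analytic in the strip of width $a_1$, with uniform $L^2$ control on horizontal lines''.

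\emph{Exploiting the support at $t=1$, and conclusion.} This is the crux. Writing $u(\cdot,1)=e^{i\Delta}u(\cdot,0)+i\int_0^1 e^{i(1-s)\Delta}\big(V(\cdot,s)u(\cdot,s)\big)\,ds$, the free term has $e^{-i|x|^2/4}e^{i\Delta}u(\cdot,0)$ analytic in $x_1$ on a fixed strip by the two--sided decay \eqref{cond2}, while the Duhamel integrand inherits, from the bound above together with \eqref{cond4}--\eqref{cond5}, exponential decay in $x_1$ that is uniform in $s$. Combining these one places $u(\cdot,1)$ in a quasianalytic class of functions of $x_1$ near $\{x_1=0\}$, and the vanishing of $u(\cdot,1)$ on the open set $\{x_1>0\}$ then forces $u(\cdot,1)\equiv0$; equivalently, one feeds the half-space vanishing at $t=1$ and the bounds above into the convexity inequality of Lemma \ref{Lemma3}/Corollary \ref{Corollary1} along a one--parameter family of weights interpolating the rate $a_1$ at $t=0$ and a rate tending to $+\infty$ at $t=1$, reaching a contradiction unless $u(\cdot,1)\equiv0$. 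Once $u(\cdot,1)\equiv0$, backward uniqueness gives $u\equiv0$ on $\R^n\times[0,1]$.

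\emph{Main obstacle.} The delicate point is the last step: the support information lives at the single time $t=1$, whereas the machinery of Lemma \ref{Lemma3} and Corollary \ref{Corollary1} is two--time in nature. Transferring it forces one either to control the Duhamel contribution $\int_0^1 e^{i(1-s)\Delta}(Vu)(\cdot,s)\,ds$ with care — the natural strip of analyticity of $e^{i(1-s)\Delta}w$ shrinks to zero as $s\to1$, so the quasianalytic estimate must be extracted uniformly down to $t=1$ — or, in the convexity approach, to keep the Carleman estimate valid for a \emph{time--dependent} linear weight $\mu(t)x_1$ despite the unbounded first--order coefficient $\mu'(t)x_1$ that the conjugation produces. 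The other steps are routine given the cited lemmas and standard energy arguments.
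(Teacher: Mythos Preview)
Your outline correctly identifies the overall shape of the argument and, crucially, the main obstacle: the support information sits at the single time $t=1$ and corresponds to an \emph{arbitrarily large} exponential rate there, whereas at $t=0$ only the fixed rate $a_1$ is available, so any direct Carleman/convexity estimate with a time-dependent linear weight $\mu(t)x_1$ blows up as $\mu(t)\to\infty$. But you stop precisely at this obstacle without resolving it, and neither of the two routes you sketch actually closes. The Duhamel/quasianalytic route fails for the reason you yourself flag: the strip of analyticity of $e^{i(1-s)\Delta}(Vu)(\cdot,s)$ collapses as $s\to1$, and the only control you have on $Vu(\cdot,s)$ is the fixed exponential rate $a_1$ from the propagation step, so no uniform quasianalytic bound on the integral emerges. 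The convexity route, as you present it, simply restates the difficulty.

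The missing idea is the Appell (conformal) transformation, Lemma~\ref{lemma2} in the paper, which you do not invoke. After rescaling to a short final time interval $[1-\delta,1]$ with $\delta=\epsilon_n/(M_0+1)$ so that the potential has small $L^1_tL^\infty_x$ norm, one applies the Appell transform with $\alpha=\delta^{1/2}a_1$ and $\beta=\lambda$. This converts the asymmetric pair of rates $(a_1\delta^{1/2},\lambda)$ at the two endpoints into a \emph{single} symmetric rate $\gamma=(\lambda\delta^{1/2}a_1)^{1/2}$ for the transformed solution $\widetilde v$, at the cost of a new potential $\widetilde V$ whose $L^1_tL^\infty_x$ norm is unchanged (so Lemma~\ref{Lemma1} still applies) but whose $L^\infty$ norm is of order $\lambda$. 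One then runs the Carleman estimate of Lemma~\ref{Lemma3}/Corollary~\ref{Corollary1} on $\widetilde v$ with $R\sim\lambda^{1/2}$ and $\sigma=cR^2$; the quadratic exponent in the Carleman weight is what absorbs the $O(\lambda)$ potential. Comparing the resulting upper and lower bounds and letting $\lambda\to\infty$ forces the vanishing of $u(\cdot,1)$ on a strip $\{m/2<x_1\le m\}$, and translation iterates this. A secondary inaccuracy: the propagation-of-decay step you describe is the content of Lemma~\ref{Lemma1} (linear weights, two-endpoint $L^2$ control), not of Lemma~\ref{Lemma3}/Corollary~\ref{Corollary1}, which carry quadratic weights and serve only in the final contradiction step.
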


\vskip.1in

\underline{Remarks:} (a) Note that in order to prove Theorem \ref{Theorem1}, 
by translation in $x_1$, we can choose who  $a_2$ is. We will show that there exists $m>0$ 
(small) with the property that if \eqref{cond1}, \eqref{cond2}, \eqref{cond4}, \eqref{cond5} hold and 
\eqref{cond3} holds with $a_2=m$, then
$$
u(x,1)=0\;\;\;\;\text{for}\;\;\;x\in\R^n\;\;\;\text{such that}\;\;\;m/2<x_1\leq m.
$$
This clearly yields the desired result. Without loss of generality we will assume $\,m<1$.

(b) By rescaling it is clear that the result in Theorem \ref{Theorem1} applies to any time interval $[0,T]$.

(c) We recall that in Theorem  \ref{Theorem1} there are no hypotheses on the size of the potential $V$ in the given class
or on its regularity.

(d) A weaker version of Theorem  \ref{Theorem1} was announced in \cite{EKPV12}.
 \vskip.07in
As a direct  consequence of  Theorem \ref{Theorem1}  we get the
following   result regarding the uniqueness of solutions for non-linear
equations of the form \eqref{E: NLS}.

\begin{theorem}
\label{Theorem 2}

Given 
$$
u_1,\,u_2 \in C([0,T] : H^k(\R^n)),\;\;\;\;\;\;\;\;0<T\leq \infty,
$$
strong solutions of \eqref{E: NLS}  with $k\in \Z^+$, $k>n/2$,
$F:\C^2\to \C$, $F\in C^{k}$  and $F(0)=\partial_uF(0)=\partial_{\bar
u}F(0)=0$
such that
\begin{equation}
\label{con2}
supp\,(u_1(\cdot,0)-u_2(\cdot,0))\subset \{x\in \R^n\,:\,x_1\leq a_2\},\;\;\;\;\;\;a_2<\infty.
\end{equation}
If for some $t\in(0,T)$ and for some $\epsilon>0$ 
\begin{equation}
\label{conditionp2b}
u_1(\cdot,t)-u_2(\cdot,t)\in L^2(e^{\epsilon\,|x_1|}\,dx),
\end{equation}
then $u_1\equiv u_2$.

\end{theorem}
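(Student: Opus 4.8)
The plan is to deduce Theorem~\ref{Theorem 2} from Theorem~\ref{Theorem1} by showing that the difference $v=u_1-u_2$ solves a linear Schr\"odinger equation of the form \eqref{maineq} to which Theorem~\ref{Theorem1} applies, after a suitable rescaling of the time interval. First I would write, using the fundamental theorem of calculus, $F(u_1,\bar u_1)-F(u_2,\bar u_2)=V(x,t)\,v$ with
$$
V(x,t)=\int_0^1\bigl(\partial_uF(u_2+\theta v,\overline{u_2+\theta v})\,d\theta\bigr)+\int_0^1\partial_{\bar u}F(u_2+\theta v,\overline{u_2+\theta v})\,d\theta\cdot\frac{\bar v}{v},
$$
so that $\partial_t v=i(\Delta v+V(x,t)v)$. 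The cleaner way to record this is to note that $v$ satisfies $\partial_t v=i(\Delta v+\tilde V(x,t)v)$ where $\tilde V$ is a bounded measurable potential: since $F\in C^k$ with $F(0)=\partial_uF(0)=\partial_{\bar u}F(0)=0$ and $u_1,u_2\in C([0,T]:H^k(\R^n))\subset C([0,T]:L^\infty(\R^n))$ by Sobolev embedding (here $k>n/2$), the functions $u_1,u_2$ are uniformly bounded on $[0,T']$ for any $T'<T$, and hence $\tilde V\in L^\infty(\R^n\times[0,T'])$, giving \eqref{cond4}.

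Next I would verify the decay hypothesis \eqref{cond5} on $\tilde V$. Because $\partial_uF$ and $\partial_{\bar u}F$ vanish at the origin and are $C^{k-1}$, there is a modulus of continuity bound $|\tilde V(x,t)|\le C(\|u_1\|_{L^\infty}+\|u_2\|_{L^\infty})(|u_1(x,t)|+|u_2(x,t)|)$ near where $u_1,u_2$ are small; more precisely $|\tilde V(x,t)|\lesssim |u_1(x,t)|+|u_2(x,t)|$. Since $u_1(\cdot,t),u_2(\cdot,t)\in H^k(\R^n)$ with $k>n/2$, one has $u_j\in C([0,T']:H^k)$ and by the Gagliardo--Nirenberg/Sobolev inequality $\|u_j(\cdot,t)\|_{L^\infty(\R^n\setminus B_\rho)}\to 0$ as $\rho\to\infty$ uniformly in $t\in[0,T']$ (using that $H^k(\R^n\setminus B_\rho)$-norms decay and continuity in $t$ gives equicontinuity of the family). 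Integrating in $t$ over $[0,T']$ then yields \eqref{cond5}. Condition \eqref{cond1} is immediate from $v\in C([0,T']:L^2)$.

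Then I would transfer the two endpoint conditions. At the initial time, \eqref{con2} is exactly \eqref{cond3}-type information but at $t=0$ rather than $t=1$; the point is that Theorem~\ref{Theorem1} is symmetric under time reversal and complex conjugation: if $u$ solves \eqref{maineq} on $[0,1]$ then $w(x,t)=\overline{u(x,1-t)}$ solves $\partial_t w=i(\Delta w+\overline{V(x,1-t)}\,w)$ on $[0,1]$, and the hypotheses on $V$ are preserved. So after rescaling the interval $[0,t]$ (with $t\in(0,T)$ as in the statement) to $[0,1]$ via Remark~(b), we may apply Theorem~\ref{Theorem1} to $w$: hypothesis \eqref{cond2} for $w$ at its ``time $0$'' is the exponential $L^2$-bound \eqref{conditionp2b} for $v$ at time $t$, and hypothesis \eqref{cond3} for $w$ at its ``time $1$'' is the compact-support (in the half-space $\{x_1\le a_2\}$) condition \eqref{con2} for $v$ at time $0$. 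Theorem~\ref{Theorem1} then forces $w\equiv 0$ on $\R^n\times[0,1]$, i.e. $u_1(\cdot,s)=u_2(\cdot,s)$ for $0\le s\le t$. Finally, to propagate uniqueness from $[0,t]$ to all of $[0,T)$, I would invoke the standard uniqueness theory for the semilinear equation \eqref{E: NLS} in $C([0,T):H^k)$ with $k>n/2$ (local well-posedness, Gronwall), which gives $u_1\equiv u_2$ on $[0,T)$, hence on $[0,T]$ when $T<\infty$.

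The main obstacle is the verification of \eqref{cond5}, i.e.\ that the effective potential $\tilde V$ genuinely satisfies the spatial-decay-integrated-in-time condition rather than merely being bounded: this requires exploiting the vanishing of $F$ and its first derivatives at $0$ together with the spatial decay of $H^k$ functions ($k>n/2$), and being careful that the bound $|\tilde V|\lesssim |u_1|+|u_2|$ holds globally (not just near small values) because on the region where $u_1,u_2$ are bounded away from $0$ the $L^\infty$ bound on the $C^{k-1}$ functions $\partial_uF,\partial_{\bar u}F$ over the relevant compact range, divided by the lower bound on $|u_j|$, is still controlled — here one should instead argue on $B_\rho$ and $\R^n\setminus B_\rho$ separately, with the outside contribution going to zero by Sobolev decay and the inside contribution being an $L^\infty$ bound that is simply discarded. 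A secondary technical point is making the time-reversal/conjugation symmetry of Theorem~\ref{Theorem1} explicit and checking the rescaling of Remark~(b) is compatible with the exponential weight (only the spatial variable is weighted, so rescaling time is harmless).
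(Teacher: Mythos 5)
Your proposal is correct and is precisely the ``direct consequence'' argument the paper intends (the paper does not spell out a proof of Theorem~\ref{Theorem 2}): linearize $F(u_1,\bar u_1)-F(u_2,\bar u_2)=\widetilde V\, v$ with $\widetilde V\in L^\infty$ satisfying \eqref{cond5} via Sobolev embedding, the vanishing of $F,\partial_u F,\partial_{\bar u}F$ at the origin, and the uniform-in-$t$ spatial decay of the compact family $\{u_j(\cdot,t)\}\subset H^k$; then reverse time and conjugate to swap the roles of \eqref{cond2}/\eqref{cond3}, rescale $[0,t]$ to $[0,1]$, apply Theorem~\ref{Theorem1}, and finish with standard $H^k$ ($k>n/2$) uniqueness to propagate $u_1\equiv u_2$ to all of $[0,T]$. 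The only cosmetic caveats are that the pointwise linear bound $|\widetilde V|\lesssim|u_1|+|u_2|$ requires $\partial_uF,\partial_{\bar u}F$ Lipschitz (so $k\geq 2$), whereas the argument you actually use — continuity of $\partial_uF,\partial_{\bar u}F$, vanishing at $0$, and $\sup_t\|u_j(\cdot,t)\|_{L^\infty(\R^n\setminus B_\rho)}\to 0$ — is what is genuinely needed and works for all $k>n/2$; and the rescaling of Remark~(b) is parabolic, so space is rescaled too, but this only renormalizes the (arbitrary) constants $a_1,a_2$ and is indeed harmless.
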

\vskip.1in

\underline{Remarks:} (a) In particular, by taking $u_2\equiv 0$, Theorem \ref{Theorem 2} shows that if $u_1(\cdot,0)$ 
has compact support, then for any $t\in (0,T)$ 
$u_1(\cdot,t)$ cannot decay exponentially.

(b) In the case $F(u,\overline u)=|u|^{\alpha-1}u$, with $\alpha>n/2$ if $\alpha$ is not an odd integer, we have that if 
$\varphi$ is the unique non-negative, radially symmetric solution of
$$
-\Delta \varphi +\omega \,\varphi = |\varphi|^{\alpha-1}\varphi,\;\;\;\;\;\;\;\;\;\;\;\;\;\omega>0,
$$
then 
\begin{equation}
\label{rem1}
u_1(x,t)=e^{i\omega t}\varphi(x)
\end{equation}
 is a solution (\lq\lq standing wave") of 
\begin{equation}
\label{NLSre}
\partial_tu=i(\Delta u+|u|^{\alpha-1}u).
\end{equation}
It was established in \cite{Str}, \cite{BLi} that there exist constants $c_0,\,c_1>0$ such that
\begin{equation}
\label{rem2}
\varphi(x)\leq c_0\,e^{-c_1|x|}.
\end{equation}
Therefore, if we denote by $u_2(x,t)$ the solution of the equation \eqref{NLSre} with $\alpha>n/2$ and data $u_2(x,0)=\varphi(x)+\phi(x)$, $\phi\in H^{s}(\R^n),\,s>n/2$
having compact support it follows from Theorem \ref{Theorem 2}, \eqref{rem1} and \eqref{rem2} that for any $t\neq 0$ 
\begin{equation}
\label{007}
u_2(\cdot,t)\notin L^2(e^{\epsilon|x|}dx),\;\;\;\;\;\text{for any}\;\;\;\;\;\epsilon>0.
\end{equation}
In general, the same result \eqref{007} applies (in the time interval $[0,T]$) if one assumes  that  $u_1$ is a solution of \eqref{rem1} having exponential decay 
$$
|u_1(x,t)|\leq c_0\, e^{-c_1|x|},\;\;\;\;\;c_0,\,c_1>0\;\;\;\;\;(x,t)\in\R^n\times [0,T],
$$
and $u_2$ is the solution of \eqref{rem1} corresponding to an initial data 
$$
u_2(x,0)=u_1(x,0)+\phi(x),\;\;\;\;\;\phi\in H^{s}(\R^n), \;\;s>n/2\;\;\;\text{with compact support}.
$$

 The rest of this paper is organized as follows: section 2 contains all the preliminary results 
 to be used in the proof of Theorem \ref{Theorem1}. A  version of them has been proved in \cite{EKPV08m},
 \cite{EKPV09},
 \cite{EKPV12}. However, in some cases  modifications are needed to apply them in the setting considered here.
 Hence, some of their proofs will be sketched. 
 Section 3 contains the proof of Theorem \ref{Theorem1}.
%%%%%%%%%%%%%%%%%%%%%%%%%%%%%%%%%
\section{Preliminary Estimates}\label{section1}
In this section we describe the estimates to be used in the proof of Theorem \ref{Theorem1}. 

 First we recall a key step in the uniform exponential decay estimate established in  \cite{KPV02} :
 
  \begin{lemma}\label{Lemma1} There exists $\epsilon_n>0$ such that if
\begin{equation}
\label{hyp2}
\mathbb V:\mathbb R^n\times [0,1]\to\mathbb C,\;\;\;\;\text{with}\;\;\;\;
\|\mathbb V\|_{L^1_tL^{\infty}_x}\leq \epsilon_n,
\end{equation}
and $u\in C([0,1]:L^2(\mathbb R^n))$ is a strong solution of the IVP
\begin{equation}
\begin{cases}
\begin{aligned}
\label{eq1}
&\partial_tu=i(\Delta +\mathbb V(x,t))u+\mathbb G(x,t),\\
&u(x,0)=u_0(x),
\end{aligned}
\end{cases}
\end{equation}
with
\begin{equation}
\label{hyp3} u_0,\,u_1\equiv u(\,\cdot\,,1)\in
L^2(e^{2\lambda\cdot x}dx),\;\;\;\;\;\;\mathbb G\in L^1([0,1]:L^2(e^{2\lambda\cdot
x}dx)),
\end{equation}
for some $\lambda\in\mathbb R^n$, then there exists $c_n$ independent of
$\lambda$ such that
\begin{equation}
\begin{aligned}
\label{uno}
&\sup_{0\leq t\leq 1}\| e^{\lambda\cdot x} u(\,\cdot\,,t)\|_{L^2(\mathbb \R^n)} \\
&\leq c_n
\Big(\|e^{\lambda\cdot x} u_0\|_{L^2(\mathbb \R^n)} + \|e^{\lambda\cdot x}
u_1\|_{L^2(\mathbb \R^n)} +\int_0^1
\|e^{\lambda\cdot x}\, \mathbb G(\cdot, t)\|_{L^2(\mathbb \R^n)} dt\Big).
\end{aligned}
\end{equation}
\end{lemma}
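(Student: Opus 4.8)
\medskip
\noindent\textbf{Proof plan.} Put $v(x,t)=e^{\lambda\cdot x}u(x,t)$, so that \eqref{uno} is the assertion
\begin{equation*}
\sup_{0\le t\le 1}\|v(\cdot,t)\|_{L^2(\R^n)}\le c_n\Big(\|v(\cdot,0)\|_{L^2}+\|v(\cdot,1)\|_{L^2}+\int_0^1\|e^{\lambda\cdot x}\mathbb{G}(\cdot,t)\|_{L^2}\,dt\Big).
\end{equation*}
Conjugating \eqref{eq1} by $e^{\lambda\cdot x}$ and using the identity $e^{\lambda\cdot x}\,\Delta\,(e^{-\lambda\cdot x}\,\cdot\,)=\Delta-2\,\lambda\cdot\nabla+|\lambda|^2$, one sees that $v$ solves
\begin{equation*}
\partial_t v = i\Delta v - 2i\,\lambda\cdot\nabla v + i|\lambda|^2 v + i\mathbb{V}v + e^{\lambda\cdot x}\mathbb{G}.
\end{equation*}
The plan is to read this as $\partial_t v=(\mathcal{S}+\mathcal{A})v+\mathcal{E}$, where $\mathcal{S}=-2i\,\lambda\cdot\nabla$ is (formally) symmetric, $\mathcal{A}=i(\Delta+|\lambda|^2)$ is skew-symmetric, and $\mathcal{E}=i\mathbb{V}v+e^{\lambda\cdot x}\mathbb{G}$ is treated as an error term, and then to invoke the abstract logarithmic-convexity lemma underlying the Hardy-type theorems quoted in the Introduction (in the version allowing an $L^1_t$-small perturbation), as in \cite{KPV02,EKPV08m,EKPV09,EKPV12}.

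The point of this particular splitting is the commutator computation responsible for the $\lambda$-independence of $c_n$: since $\lambda$ is constant, $\partial_t\mathcal{S}=0$, and since $\lambda\cdot\nabla$ and $\Delta$ have constant coefficients,
\begin{equation*}
[\mathcal{S},\mathcal{A}]=[\,-2i\,\lambda\cdot\nabla\,,\,i\Delta\,]=2\,[\,\lambda\cdot\nabla\,,\,\Delta\,]=0.
\end{equation*}
Hence $\partial_t\mathcal{S}-[\mathcal{S},\mathcal{A}]\equiv 0$, so the convexity lemma produces no lower-order loss from this term and its constant depends only on $n$. For the error we have the pointwise-in-time bound $\|\mathcal{E}(\cdot,t)\|_{L^2}\le \|\mathbb{V}(\cdot,t)\|_{L^\infty}\,\|v(\cdot,t)\|_{L^2}+\|e^{\lambda\cdot x}\mathbb{G}(\cdot,t)\|_{L^2}$, with $\int_0^1\|\mathbb{V}(\cdot,t)\|_{L^\infty}\,dt\le\epsilon_n$. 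Feeding this into the convexity lemma shows that $H(t):=\|v(\cdot,t)\|_{L^2}^2$ is logarithmically convex up to the error, yielding
\begin{equation*}
\|v(\cdot,t)\|_{L^2}\le c_n\Big(\|v(\cdot,0)\|_{L^2}^{1-t}\,\|v(\cdot,1)\|_{L^2}^{t}+\int_0^1\|e^{\lambda\cdot x}\mathbb{G}(\cdot,s)\|_{L^2}\,ds\Big),
\end{equation*}
where the smallness of $\epsilon_n$ is what allows one to absorb the $\|v\|$-proportional part of $\mathcal{E}$ while keeping $c_n$ bounded. Since the interpolated term is at most $\|v(\cdot,0)\|_{L^2}+\|v(\cdot,1)\|_{L^2}$ and the bound is uniform in $t\in[0,1]$, this is exactly \eqref{uno}.

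The part I expect to be the real work is making these formal manipulations rigorous: a priori, from \eqref{hyp3} one only knows $v(\cdot,0),v(\cdot,1)\in L^2$, not $v(\cdot,t)\in L^2$ for $0<t<1$, so neither $H(t)$ nor the integrations by parts inside the convexity lemma are yet legitimate. The standard device is a truncation of the weight: replace $e^{\lambda\cdot x}$ by a family of bounded smooth functions $\varphi_R$ increasing to $e^{\lambda\cdot x}$ with $|\nabla\varphi_R|\le C|\lambda|\varphi_R$ and $|\Delta\varphi_R|\le C|\lambda|^2\varphi_R$ uniformly in $R$, so that $v_R=\varphi_R u\in C([0,1]:L^2)$ genuinely; $v_R$ then solves an equation of the same shape, with $-2i\,\lambda\cdot\nabla$ replaced by $-2i\,(\varphi_R^{-1}\nabla\varphi_R)\cdot\nabla$ together with a zeroth-order term, both agreeing with the target off the transition region $\{\varphi_R\ne e^{\lambda\cdot x}\}$. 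One must then check that the contributions of that transition region, and the effect of $\varphi_R$ on the two endpoint terms, are dominated by the right-hand side of \eqref{uno} uniformly in $R$ --- this is where $u\in C([0,1]:L^2)$, the hypotheses \eqref{hyp3}, and the smallness of $\|\mathbb{V}\|_{L^1_tL^\infty_x}$ (to absorb the potential's share) enter --- and that they vanish as $R\to\infty$. Monotone convergence then gives both $e^{\lambda\cdot x}u(\cdot,t)\in L^2$ for every $t\in[0,1]$ and the bound \eqref{uno}. This bookkeeping is essentially that of \cite{EKPV08m,EKPV09,EKPV12}, and the write-up need only record the modifications forced by working with the linear exponential weight $e^{\lambda\cdot x}$ rather than the Gaussian weights used there.
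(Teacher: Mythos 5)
The paper itself does not prove Lemma~\ref{Lemma1}: it is recalled as ``a key step in the uniform exponential decay estimate established in \cite{KPV02}'', so there is no in-text proof to compare against. Your plan is nonetheless sound, and it is consistent with the log-convexity machinery this paper does use later (Step~3 of the proof of Theorem~\ref{Theorem1} invokes the same abstract identity \eqref{E: derivadasegunda} from Lemma~2 of \cite{EKPV08m}). The conjugation $v=e^{\lambda\cdot x}u$, the splitting into $\mathcal{S}=-2i\lambda\cdot\nabla$ (self-adjoint, since $\nabla$ is skew and $\lambda$ is real) and $\mathcal{A}=i(\Delta+|\lambda|^2)$ (skew-adjoint), and the observation that $\mathcal{S}_t+[\mathcal{S},\mathcal{A}]=0$ because both operators are constant-coefficient and time-independent, is exactly the structural reason why the constant $c_n$ in \eqref{uno} can be chosen independent of $\lambda$; and the $\epsilon_n$-smallness of $\|\mathbb V\|_{L^1_tL^\infty_x}$ is what lets the $\|\mathbb V v\|_2\le\|\mathbb V(t)\|_\infty\|v(t)\|_2$ part of the error be absorbed without inflating that constant. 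Your identification of the truncation of the weight as the genuine technical content is also correct; that is precisely the bookkeeping done in \cite{KPV02} and in the later \cite{EKPV08b,EKPV08m,EKPV09} papers, and the argument you sketch (bounded approximants $\varphi_R\uparrow e^{\lambda\cdot x}$ with controlled logarithmic derivatives, uniform estimates, monotone convergence) is the standard one. For comparison with the source: \cite{KPV02} derives the convexity of $H(t)=\|e^{\lambda\cdot x}u(\cdot,t)\|_2^2$ for the free flow rather more explicitly on the Fourier side, via $\|e^{\lambda\cdot x}e^{it\Delta}u_0\|_2^2=\int e^{4t\lambda\cdot\xi}\big|\widehat{e^{\lambda\cdot x}u_0}(\xi)\big|^2\,d\xi$ and H\"older in $t$, and then perturbs; your abstract operator-theoretic formulation is the cleaner, later repackaging of the same convexity fact, so in substance the two arguments coincide.
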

\vspace{0,07 cm}

 Notice that in Lemma \ref{Lemma1} one assumes the existence of a reference $L^2$- solution $ u $ of the equation \eqref{eq1} 
 and gets  a control on the decay of the solution in the whole
time interval 
in terms of that  at the end  points and that of  the \lq\lq external force''. 
 In general, under appropriate assumptions on the potential $V(x,t)$ in \eqref{E:1.1} one writes
$$
V(x,t)u=  \chi_{\rho} V(x,t)u + (1-\chi_{\rho}) V(x,t)u = \mathbb V(x,t)u + \mathbb G(x,t),
$$
with $\chi_{\rho}\in C^{\infty}_0,\,$ $\chi_{\rho}(x)=1,\,|x|<\rho$, supported in $|x|<2\rho$,  and
obtains the estimate 
\eqref{uno} by fixing $\,\rho\,$ sufficiently large. Also under appropriate hypotheses
on $F$ and $u$  a similar argument 
can be used for 
the semi-linear equation in 
\eqref{E: NLS}.
\vskip.03in
%%%%%%%%

Next, we recall the conformal or Appell transformation: 

  \begin{lemma}\label{lemma2}
If   $u(y,s)$ verifies
\begin{equation}%\label{E: parab&#151;licageneral}
\label{2.1}
\partial_su=i\left(\triangle
u+V(y,s)u+F(y,s)\right),\;\;\;\;\;\;\;(y,s)\in \R^n\times [0,1],
\end{equation}
and $\alpha$ and $\beta$ are positive,  then
\begin{equation}
\label{2.2}
\widetilde u(x,t)=\left(\tfrac{\sqrt{\alpha\beta}}{\alpha(1-t)+\beta
t}\right)^{\frac n2}u\left(\tfrac{\sqrt{\alpha\beta}\,
x}{\alpha(1-t)+\beta t}, \tfrac{\beta t}{\alpha(1-t)+\beta
t}\right)e^{\frac{\left(\alpha-\beta\right) |x|^2}{4i(\alpha(1-t)+\beta
t)}},
\end{equation}
verifies
\begin{equation}%\label{E: parab&#151;licageneral2}
\label{2.3}
\partial_t\widetilde u=i\left(\triangle \widetilde u+\widetilde
V(x,t)\widetilde u+\widetilde F(x,t)\right),\;\;\;\;(x,t)\in  \R^n\times
[0,1],
\end{equation}
with
\begin{equation}
\label{potencial}
\widetilde V(x,t)=\tfrac{\alpha\beta}{\left(\alpha(1-t)+\beta
t\right)^2}\,V\left(\tfrac{\sqrt{\alpha\beta}\, x}{\alpha(1-t)+\beta t},
\tfrac{\beta t}{\alpha(1-t)+\beta t}\right),
\end{equation}
and
\begin{equation}
\label{externalforce}
\widetilde F(x,t)=\left(\tfrac{\sqrt{\alpha\beta}}{\alpha(1-t)+\beta
t}\right)^{\frac n2+2}F\left(\tfrac{\sqrt{\alpha\beta}\,
x}{\alpha(1-t)+\beta t}, \tfrac{\beta t}{\alpha(1-t)+\beta
t}\right)e^{\frac{\left(\alpha-\beta\right) |x|^2}{4i(\alpha(1-t)+\beta
t)}}.
\end{equation}
\end{lemma}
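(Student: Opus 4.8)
The plan is to prove Lemma~\ref{lemma2} by a direct, if slightly lengthy, computation with the chain and product rules; this is the classical Appell (pseudoconformal) transformation of the Schr\"odinger equation, and the only difficulty is keeping careful track of signs and factors. Throughout write $\mu=\mu(t):=\alpha(1-t)+\beta t$, so that $\mu(0)=\alpha$, $\mu(1)=\beta$, $\mu'=\beta-\alpha$; put
$$
y=\frac{\sqrt{\alpha\beta}\,x}{\mu},\qquad s=\frac{\beta t}{\mu},\qquad \varphi(x,t)=-\frac{(\alpha-\beta)|x|^2}{4\mu},
$$
and set $g(t)=\bigl(\sqrt{\alpha\beta}/\mu\bigr)^{n/2}$, so that \eqref{2.2} reads $\widetilde u(x,t)=g(t)\,u(y,s)\,e^{i\varphi}$. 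The heart of the matter is the pointwise identity
$$
\partial_t\widetilde u-i\Delta_x\widetilde u=\frac{\alpha\beta}{\mu^2}\,g(t)\,e^{i\varphi}\,\bigl(\partial_s u-i\Delta_y u\bigr)(y,s),
$$
which holds for every $u\in C^2(\R^n\times[0,1])$ and makes no reference to any equation. Granting it, if $u$ solves \eqref{2.1} then $\partial_s u-i\Delta_y u=i\bigl(V(y,s)u(y,s)+F(y,s)\bigr)$, so the right-hand side equals $i\frac{\alpha\beta}{\mu^2}\bigl(V(y,s)\,g\,u(y,s)\,e^{i\varphi}+g\,F(y,s)\,e^{i\varphi}\bigr)$; since $g\,u(y,s)\,e^{i\varphi}=\widetilde u$ and $\alpha\beta/\mu^2=(\sqrt{\alpha\beta}/\mu)^2$, this is precisely $i(\widetilde V\widetilde u+\widetilde F)$ with $\widetilde V,\widetilde F$ as in \eqref{potencial}, \eqref{externalforce}. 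This gives \eqref{2.3}.

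To prove the displayed identity I would first record the elementary derivatives of the change of variables: $\partial_t s=\alpha\beta/\mu^2$, $\partial_t y=(\alpha-\beta)y/\mu$, $\nabla_x y=(\sqrt{\alpha\beta}/\mu)\,I$, $\nabla_x\varphi=-(\alpha-\beta)x/(2\mu)$, $\Delta_x\varphi=n(\beta-\alpha)/(2\mu)$, together with the three cancellation identities
$$
2\tfrac{\sqrt{\alpha\beta}}{\mu}\,\nabla_x\varphi+\partial_t y=0,\qquad \partial_t\varphi+|\nabla_x\varphi|^2=0,\qquad g'(t)+g(t)\,\Delta_x\varphi=0,
$$
all of which follow at once from $\mu'=\beta-\alpha$. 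Then, writing $w(x,t)=u(y,s)$, so that $\partial_t w=(\partial_s u)\,\partial_t s+\nabla_y u\cdot\partial_t y$, $\nabla_x w=\tfrac{\sqrt{\alpha\beta}}{\mu}\nabla_y u$ and $\Delta_x w=\tfrac{\alpha\beta}{\mu^2}\Delta_y u$, and using $\Delta_x(we^{i\varphi})=\bigl(\Delta_x w+2i\,\nabla_x w\cdot\nabla_x\varphi+(i\Delta_x\varphi-|\nabla_x\varphi|^2)w\bigr)e^{i\varphi}$, I would expand $\partial_t\widetilde u=\bigl(g'w+g\,\partial_t w+ig\,w\,\partial_t\varphi\bigr)e^{i\varphi}$ and $i\Delta_x\widetilde u$ and subtract. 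The first identity kills the terms linear in $\nabla_y u$; the second kills the imaginary zeroth-order terms in $w$; the third kills the real zeroth-order terms; what survives is exactly $\tfrac{\alpha\beta}{\mu^2}\,g\,e^{i\varphi}(\partial_s u-i\Delta_y u)$, as claimed.

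I expect the only obstacle to be the bookkeeping of these cancellations; there is nothing deeper. Alternatively, \eqref{2.2}--\eqref{externalforce} can be recovered by composing the elementary symmetries of the free Schr\"odinger equation --- parabolic dilations $u(y,s)\mapsto\lambda^{n/2}u(\lambda y,\lambda^2 s)$, time translations, and the pseudoconformal inversion $u(y,s)\mapsto s^{-n/2}e^{i|y|^2/4s}u(y/s,-1/s)$ --- and tracking their effect on the potential and the forcing term, but the direct verification sketched above is shorter and self-contained.
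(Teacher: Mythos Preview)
Your proof is correct. The three cancellation identities you isolate are exactly what makes the computation clean, and your verification of the key identity $\partial_t\widetilde u-i\Delta_x\widetilde u=\frac{\alpha\beta}{\mu^2}\,g\,e^{i\varphi}(\partial_s u-i\Delta_y u)$ is accurate; from there the formulas for $\widetilde V$ and $\widetilde F$ follow as you indicate, since $\alpha\beta/\mu^2=(\sqrt{\alpha\beta}/\mu)^2$.

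There is nothing to compare against: the paper does not give a proof of this lemma. It is stated there as a recalled fact (``Next, we recall the conformal or Appell transformation'') and used as a black box in Step~3 of the proof of Theorem~\ref{Theorem1}. Your direct computation is precisely the standard way to verify it, and your alternative remark about factoring the transformation through dilations, time translations, and the pseudoconformal inversion is also a valid route.
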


%%%%%
%%%%%%%
The following result  is a modified version of the one 
in \cite{EKPV06} (Lemma 3.1, page 1818). It will provide a needed lower bound  of the $L^2$-norm of the solution of the equation \eqref{E:1.1} 
and its first order derivatives in the $x_1$-variable  in  the domain $\{x\,:\,R-1<x_1<R\}\times [0,1]$.

\begin{lemma}\label{Lemma3} 
Assume that $ R>0$ large enough and that $\,\varphi : [0,1] \to \R$ is a smooth function. Then, there
exists 
$\,c=c(n;\|\varphi' \|_{\infty}+\|\varphi'' \|_{\infty})>0$ such that the inequality
\begin{equation}
\label{cpde1} 
\frac{\sigma^{3/2}}{R^2}\,\Big\|\,e^{\sigma |\frac{x_1-x_{0,1}}{R}+\varphi(t)|^2}g
\Big\|_{L^2(dxdt)}
\leq c\, \Big\|\,e^{\sigma |\frac{x_1-x_{0,1}}{R}+\varphi(t)|^2}(i
\partial_t+\Delta) g \Big\|_{L^2(dxdt)}
\end{equation}
holds when $\,\sigma \geq  c R^2 \,$ and $\,g\in C^{\infty}_0(\R^{n+1})\,$ is supported on
the set
$$
\{(x,t)=(x_1,..,x_n,t)\,\in\R^{n+1}\,:\, |\frac{x_1-x_{0,1}}{R}+\varphi(t)|\geq 1\}.
$$
\end{lemma}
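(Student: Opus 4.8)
The plan is to establish \eqref{cpde1} by the conjugation‑and‑commutator method for Carleman inequalities, adapted to the Schr\"odinger operator $i\partial_t+\Delta$, following \cite{EKPV06} but keeping careful track of the dependence on $\sigma$, $R$ and $\varphi$.

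After a translation in $x_1$ we may assume $x_{0,1}=0$. Write $\psi(x,t)=\frac{x_1}{R}+\varphi(t)$ and $\phi=\sigma\psi^2$, so that $\phi$ depends only on $(x_1,t)$, and set $f=e^{\phi}g$, which again belongs to $C^\infty_0(\R^{n+1})$ and is supported in $\{|\psi|\ge 1\}$. Then $e^{\phi}(i\partial_t+\Delta)g=\mathcal L_\phi f$, where $\mathcal L_\phi:=e^{\phi}(i\partial_t+\Delta)e^{-\phi}$, and a direct computation gives
$$
\mathcal L_\phi=i\partial_t+\Delta-2\nabla\phi\cdot\nabla+|\nabla\phi|^2-\Delta\phi-i\,\partial_t\phi .
$$
Substituting $\partial_t\phi=2\sigma\psi\varphi'$, $\nabla\phi\cdot\nabla=\frac{2\sigma\psi}{R}\partial_{x_1}$, $|\nabla\phi|^2=\frac{4\sigma^2\psi^2}{R^2}$ and $\Delta\phi=\frac{2\sigma}{R^2}$, one separates $\mathcal L_\phi=\mathcal S+\mathcal A$ into its symmetric and skew‑symmetric parts for the inner product of $L^2(\R^{n+1},dx\,dt)$:
$$
\mathcal S=i\partial_t+\Delta+\frac{4\sigma^2\psi^2}{R^2},\qquad
\mathcal A=-\frac{4\sigma\psi}{R}\partial_{x_1}-\frac{2\sigma}{R^2}-2i\sigma\psi\varphi' ,
$$
the term $-\Delta\phi=-\frac{2\sigma}{R^2}$ being exactly what is needed to turn $-\frac{4\sigma\psi}{R}\partial_{x_1}$ into a genuinely skew operator. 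Since $f\in C^\infty_0$, integrating by parts gives
$$
\|\mathcal L_\phi f\|_{L^2}^2=\|\mathcal S f\|_{L^2}^2+\|\mathcal A f\|_{L^2}^2+\langle[\mathcal S,\mathcal A]f,f\rangle\ \ge\ \langle[\mathcal S,\mathcal A]f,f\rangle ,
$$
with $[\mathcal S,\mathcal A]$ symmetric, so everything reduces to a lower bound for this commutator.

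Because $\psi$ is affine in $x_1$, the commutator computation is short: the only ``principal'' contributions are $[\Delta,-\frac{4\sigma\psi}{R}\partial_{x_1}]=-\frac{8\sigma}{R^2}\partial_{x_1}^2$ and $[\frac{4\sigma^2\psi^2}{R^2},-\frac{4\sigma\psi}{R}\partial_{x_1}]=\frac{32\sigma^3\psi^2}{R^4}$, while the commutators of $i\partial_t$ and $\Delta$ with the $t$‑dependent factors (those carrying $\varphi'$ or $\varphi''$) are of lower order. One obtains, as quadratic forms,
$$
\langle[\mathcal S,\mathcal A]f,f\rangle=\frac{8\sigma}{R^2}\|\partial_{x_1}f\|_{L^2}^2+\frac{32\sigma^3}{R^4}\int\psi^2|f|^2+\mathcal E ,
$$
where
$$
\mathcal E=2\sigma\int\big((\varphi')^2+\psi\varphi''\big)|f|^2-\mathrm{Re}\int\frac{8i\sigma\varphi'}{R}\,(\partial_{x_1}f)\,\overline f .
$$
The first two terms are nonnegative, and on $\text{supp}\,f$ one has $\psi^2\ge 1$, so $\int\psi^2|f|^2\ge\|f\|_{L^2}^2$. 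It remains to absorb $\mathcal E$: the cross term is $\le\frac{8\sigma\|\varphi'\|_\infty}{R}\|\partial_{x_1}f\|_{L^2}\|f\|_{L^2}$, controlled by a small multiple of $\frac{8\sigma}{R^2}\|\partial_{x_1}f\|_{L^2}^2$ plus $C\sigma\|\varphi'\|_\infty^2\|f\|_{L^2}^2$; the term $2\sigma\psi\varphi''$ is handled via $2\sigma|\psi|\,\|\varphi''\|_\infty\le\sigma\|\varphi''\|_\infty\psi^2+\sigma\|\varphi''\|_\infty$, whose first piece is absorbed into $\frac{32\sigma^3}{R^4}\int\psi^2|f|^2$; and $2\sigma(\varphi')^2\le2\sigma\|\varphi'\|_\infty^2$. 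Hence every residual $\|f\|_{L^2}^2$‑coefficient is $\lesssim\sigma\big(\|\varphi'\|_\infty^2+\|\varphi''\|_\infty\big)$, which is $\le\frac12\cdot\frac{32\sigma^3}{R^4}$ as soon as $R$ is large and $\sigma\ge cR^2$ with $c=c(n,\|\varphi'\|_\infty+\|\varphi''\|_\infty)$ chosen large enough. Therefore $\langle[\mathcal S,\mathcal A]f,f\rangle\ge c'\,\sigma^3R^{-4}\|f\|_{L^2}^2$, so $\|\mathcal L_\phi f\|_{L^2}\ge\sqrt{c'}\,\sigma^{3/2}R^{-2}\|f\|_{L^2}$, and undoing the conjugation ($f=e^{\phi}g$, $\mathcal L_\phi f=e^{\phi}(i\partial_t+\Delta)g$) yields \eqref{cpde1}.

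I expect the only delicate point to be the bookkeeping in this last step: one must check that the commutator $[\mathcal S,\mathcal A]$ has the right sign structure — the two principal terms positive and every remaining term genuinely carrying an extra factor of $\sigma/R^2$ (or of $\varphi'$, $\varphi''$) relative to $\frac{\sigma}{R^2}\|\partial_{x_1}f\|_{L^2}^2$ and $\frac{\sigma^3}{R^4}\|f\|_{L^2}^2$ — so that the hypothesis $\sigma\ge cR^2$ with $c$ large buys the required smallness and nothing is left over. The role of the support hypothesis $|\psi|\ge 1$ is precisely to convert $\frac{\sigma^3}{R^4}\int\psi^2|f|^2$ into a bona fide multiple of $\|f\|_{L^2}^2$; without it the estimate fails. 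Once $[\mathcal S,\mathcal A]$ has been written out explicitly, none of the individual inequalities is hard.
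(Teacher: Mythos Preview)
Your proof is correct and follows essentially the same approach as the paper's: the same conjugation $f=e^{\sigma\psi^2}g$, the same symmetric/skew-symmetric splitting (your $\mathcal S$ and $\mathcal A$ are the paper's $S_\sigma$ and $-4\sigma A_\sigma$), the same commutator computation, and the same absorption of the $\varphi'$, $\varphi''$ error terms using $\sigma\ge cR^2$ together with the support condition $|\psi|\ge 1$. The only visible discrepancy is the constant in front of $\sigma^3R^{-4}\int\psi^2|f|^2$ (you get $32$, the paper records $16$), but this is immaterial for the argument.
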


 \begin{proof} As it was remarked above this result is a variation of the one  given in  detail in  \cite{EKPV06},
hence a sketch will suffice.

By translation, without loss of generality, we can assume $\,x_{0,1}=0$.  Let
$$
f(x,t)= e^{\sigma |\frac{x_1}{R}+\varphi(t)|^2}g(x,t).
$$
Then,
\begin{equation}
\label{3.3}
e^{\sigma|\frac {x_1}{R}+\varphi(t)|^2}(i \partial_t+\Delta)g=S_\sigma f-4\sigma A_\sigma f,
\end{equation}
where
\begin{align*}
&S_\sigma=i\partial_t+\Delta+\tfrac{4\sigma^2}{R^2}|\tfrac {x_1}{R}+\varphi|^2,\\
\\
&A_\sigma=\tfrac 1R\left (\tfrac {x_1}{R}+\varphi \right)\,\partial_{x_1}+ \tfrac 1{2R^2}+
\tfrac{i\ \varphi'}2\left(\tfrac {x_1}R+\varphi\right ).
\end{align*}
Thus,
\begin{equation}
\label{3.5}
S_\sigma^*=S_\sigma,\;\;\;\;\;\;\;A_\sigma^*=-A_\sigma,
\end{equation}
and integrating by parts (possible since $\,g\in C^{\infty}_0(\R^{n+1})\,$) one sees that
\begin{align*}
\|e^{\sigma |\frac{x_1}{R}+\varphi|^2}&(i \partial_t+\Delta)g\|_2^2
=\langle S_\sigma f-4\sigma A_\sigma f, S_\sigma f-4\sigma A_\sigma f\rangle\\
&\geq -4\sigma\langle (S_\sigma A_\sigma - A_\sigma S_\sigma )f, f \rangle =  -4\sigma\langle [S_\sigma ,A_\sigma]f, f\rangle\ .
\end{align*}
A calculation shows that
\begin{equation*}
 [S_\sigma ,A_\sigma]=\tfrac 2{R^2} \partial_{x_1}^2-\tfrac {4\sigma^2}{R^4}
|\tfrac {x_1}{R}+\varphi|^2-\tfrac 12 [(\tfrac
{x_1}R+\varphi)\varphi''+(\varphi')^2]+\tfrac {2i\varphi'}R\partial_{x_1}.
\end{equation*}

From this it follows that 
\begin{equation}
\label{3.7}
\begin{aligned}
&\|e^{\sigma|\tfrac {x_1}{R}+\varphi|^2}(i\partial_t+\Delta)g\|_2^2\\
&\;\;\;\;\geq  
\frac{16\sigma^3}{R^4} \int |\tfrac {x_1}{R}+\varphi|^2 |f|^2dxdt+\frac {8\sigma} {R^2}\int |\partial_{x_1}f|^2dxdt\\
&\;\;\;\;+2\sigma\int[(\tfrac {x_1}R+\varphi)\varphi''+(\varphi')^2]|f|^2dxdt- \Im\,(\frac{8\sigma i}R\int \varphi'\,\partial_{x_1} f\bar fdxdt)\ .
\end{aligned}
\end{equation}
 Now, when $\,\sigma\geq cR^2$ one has
 $$
 \frac{\sigma^3}{R^4}\geq c^2\sigma,
 $$
 so by taking $\,c$ large enough, depending on $\|\varphi' \|_{\infty} $ and $ \|\varphi'' \|_{\infty}$, and using that \newline
 $\,|\tfrac{x_1}{R}+\varphi(t)|\geq 1$ on the $supp (f)=supp (g)$, we can hide the third term on the right hand side (r.h.s.) in the inequality  \eqref{3.7} 
 in the first term in the r.h.s. 
 Also, since
 $$
 \aligned
& |\frac{8\sigma i}R\int |\varphi'|\,|\partial_{x_1} f| |\bar f| dxdt| \\
&\leq \frac{8\sigma}{R}\|\varphi'\|_{\infty}\int |f| |\partial_{x_1}f|
 \leq 4 \sigma \|\varphi'\|_{\infty}^2\int|f|^2dxdt+ \frac{4\sigma}{R^2}\int|\partial_{x_1}f|^2dxdt,
\endaligned
 $$
the contribution of this term in \eqref{3.7}  can be hidden by the first and second term in the r.h.s. of \eqref{3.7} if $c$ is large. This concludes the proof.
 \end{proof}
\vskip.05in

 Note that the same proof works by taking $c$ a bit larger, if  we only assume $ \,|\tfrac{x_1}{R}+\varphi(t)|\geq 1/2$ on $supp (g)$.

 In the proof of Theorem \ref{Theorem1} we shall  need the following extension of  Lemma \ref{Lemma3}.

\begin{corollary}\label{Corollary1} 

 Assume $g\in L^2(\R^{n+1})$ with $x_1,\,t$ on $\,supp(g)$  bounded,
$$
supp(g) \subset \{(x,t)=(x_1,..,x_n,t)\,\in\R^{n+1}\,:\, |\frac{x_1-x_{0,1}}{R}+\varphi(t)|\geq 1\}
$$
and $\,(i\partial_t+\Delta) g\in  L^2(\R^{n+1})$, then the inequality \eqref{cpde1} holds.
\end{corollary}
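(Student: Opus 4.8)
The plan is to deduce the estimate from Lemma \ref{Lemma3} by a two-stage regularization of $g$: first mollify in all of the variables $(x,t)$ to gain smoothness without destroying the support conditions, and then cut off in the transverse variables $x'=(x_2,\dots,x_n)$ and pass to the limit, the weight being harmless because $g$ is supported in a region that is bounded in $x_1$ and $t$.

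Before that, a few harmless normalizations. By translation I may assume $x_{0,1}=0$; since $\text{supp}(g)$ is bounded in $x_1$ and in $t$, say contained in $\{|x_1|\le K\}\times[a,b]$, I extend $\varphi$ to a compactly supported smooth function on $\R$ without increasing $\|\varphi'\|_\infty+\|\varphi''\|_\infty$ by more than a fixed constant. On $\text{supp}(g)$ the weight $e^{\sigma|\frac{x_1}{R}+\varphi(t)|^2}$ is bounded by a finite constant $C_\sigma$ (depending on $\sigma,R,K,a,b,\varphi$ but not on the approximation parameters below); hence both sides of \eqref{cpde1} are finite, using that $g$ and $(i\partial_t+\Delta)g$ lie in $L^2(\R^{n+1})$ and that $(i\partial_t+\Delta)g$ is supported in $\text{supp}(g)$.

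Stage 1: let $\rho_\e$ be a standard mollifier supported in the $\e$-ball of $\R^{n+1}$ and set $g_\e=g*\rho_\e$. Then $g_\e\in C^\infty$ and every derivative $\partial^\alpha g_\e=g*\partial^\alpha\rho_\e$ lies in $L^2(\R^{n+1})$ by Young's inequality; $\text{supp}(g_\e)$ sits in the $\e$-neighbourhood of $\text{supp}(g)$, still bounded in $x_1$ and $t$, and, since $|\frac{x_1}{R}+\varphi(t)|$ oscillates by at most $C_\varphi\e$ across that neighbourhood, for $\e$ small $g_\e$ is supported where $|\frac{x_1}{R}+\varphi(t)|\ge 1/2$; moreover $g_\e\to g$ and $(i\partial_t+\Delta)g_\e=\big((i\partial_t+\Delta)g\big)*\rho_\e\to (i\partial_t+\Delta)g$ in $L^2$. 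Stage 2: fix $\e$, pick $\psi\in C_0^\infty(\R^{n-1})$ with $\psi\equiv1$ on $\{|x'|\le1\}$ and $\text{supp}\,\psi\subset\{|x'|\le2\}$, and put $g_{\e,N}(x,t)=g_\e(x,t)\,\psi(x'/N)$. Then $g_{\e,N}\in C_0^\infty(\R^{n+1})$ is supported where $|\frac{x_1}{R}+\varphi(t)|\ge 1/2$, so Lemma \ref{Lemma3} and the remark following it (with $c$ slightly enlarged but uniform in $\e,N$) give, for $\sigma\ge cR^2$,
\[
\tfrac{\sigma^{3/2}}{R^2}\big\|e^{\sigma|\frac{x_1}{R}+\varphi(t)|^2}g_{\e,N}\big\|_{L^2(dxdt)}\le c\,\big\|e^{\sigma|\frac{x_1}{R}+\varphi(t)|^2}(i\partial_t+\Delta)g_{\e,N}\big\|_{L^2(dxdt)}.
\]
Letting $N\to\infty$, the left side converges to $\tfrac{\sigma^{3/2}}{R^2}\|e^{\sigma|\frac{x_1}{R}+\varphi(t)|^2}g_\e\|_{L^2}$ since $g_{\e,N}\to g_\e$ in $L^2$ and the weight is bounded on the common $(x_1,t)$-bounded support; for the right side I expand
\[
(i\partial_t+\Delta)g_{\e,N}=\psi(\tfrac{x'}{N})(i\partial_t+\Delta)g_\e+\tfrac2N(\nabla_{x'}\psi)(\tfrac{x'}{N})\cdot\nabla_{x'}g_\e+\tfrac1{N^2}(\Delta_{x'}\psi)(\tfrac{x'}{N})g_\e,
\]
where the first term tends to $(i\partial_t+\Delta)g_\e$ in $L^2$ while the last two tend to $0$ in $L^2$ because $\nabla_{x'}g_\e$ and $g_\e$ lie in $L^2(\R^{n+1})$, so their norms over $\{|x'|\ge N\}$ vanish. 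Thus the inequality holds with $g_\e$ in place of $g_{\e,N}$, and finally letting $\e\to0$, using the Stage-1 convergences and once more the uniform boundedness of the weight on the supports, yields \eqref{cpde1} for $g$.

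The main obstacle is the transverse truncation: truncating $g$ itself directly would produce the commutator term $\tfrac2N(\nabla_{x'}\psi)(x'/N)\cdot\nabla_{x'}g$, and $\nabla_{x'}g$ need not belong to $L^2$ under these hypotheses (only $g$ and $(i\partial_t+\Delta)g$ do, which fails to control $\nabla g$ near the characteristic paraboloid). Performing the mollification first is precisely the fix, since then all derivatives of $g_\e$ are in $L^2$; everything else is a routine double limit, resting on the simple fact that the Gaussian weight is bounded on the $(x_1,t)$-bounded set where $g$ lives.
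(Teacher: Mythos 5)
Your proof is correct and follows essentially the same two-stage regularization as the paper: mollify in all variables, truncate in $x'$, apply Lemma~\ref{Lemma3} to the resulting $C_0^\infty$ function, send the truncation parameter to infinity, and then send the mollification parameter to zero using that the weight is bounded on the $(x_1,t)$-bounded support. The only cosmetic difference is that the paper uses a non-isotropic (parabolically scaled) mollifier and invokes the corresponding maximal function for dominated convergence, whereas you use an isotropic one and pass to the limit directly via $L^2$-convergence against the bounded weight; both are equally valid here.
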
 

\begin{proof} We can again assume that  $x_{0,1}=0$. We introduce the notation  $x=(x_1,x')\in\R\times \R^{n-1}$. Let $\eta_1\in C^{\infty}_0(\R)$,  $\eta_1\geq 0$, $\,supp(\eta_1)\subset\{|x_1|<1\}$ and
$\eta_2\in C^{\infty}_0(\R^{n-1})$, $\eta_2\geq 0$, $supp(\eta_2)\subset\{|x'|<1\}$ with
$$
\int_{\R}\,\eta_1(x_1)dx_1=1\;\;\;\;\;\;\text{and}\;\;\;\;\;\;\int_{\R^{n-1}} \eta_2(x')dx'=1.
$$

 For $\delta>0$ small define
$$
h_{\delta}(x,t)=\frac{1}{\delta^{n+2}} \eta_1(t/\delta^2) \eta_1(x_1/\delta) \eta_2(x'/\delta^{n-1})\;\;\;\;\;\;\text{and}\;\;\;\;\;\;\;g_{\delta}=h_{\delta}\ast g.
$$

 Let $\theta\in C^{\infty}_0(\R^{n-1}),\,\theta(x')=1,\;|x'|\leq 1$, and $\,supp(\theta)\subset \{|x'|<2\}$. For $l$ large, define
$$
g_{\delta,l}(x,t)=\theta(x'/l)\,g_{\delta}(x,t).
$$
Note that for $\delta>0$ small, 
$$
supp(g_{\delta})\subset \{(x,t)\,: \,|\tfrac{x_1}{R}+\varphi(t)|^2\geq 1/2\}, 
$$
and the same holds for $g_{\delta,l}$.
 Moreover, $g_{\delta,l}\in C^{\infty}_0(\R^{n+1})$.
 
 We apply Lemma \ref{Lemma3} to $g_{\delta,l}$ to obtain:
 \begin{equation}
 \label{paso1}
 \frac{\sigma^{3/2}}{R^2}\,\Big\|\,e^{\sigma |\frac{x_1}{R}+\varphi(t)|^2}g_{\delta,l}
\Big\|_{L^2(dxdt)}
\leq c\, \Big\|\,e^{\sigma |\frac{x_1}{R}+\varphi(t)|^2}(i
\partial_t+\Delta) g_{\delta,l} \Big\|_{L^2(dxdt)}.
\end{equation}
 Next, we fix $\delta>0$ small and see that
 \begin{equation}
 \label{paso2}
 \begin{aligned}
 (i\partial_t+\Delta) g_{\delta,l} &=\theta(x'/l)(i\partial_t+\Delta) g_{\delta}\\
& +\frac{2}{l}\nabla'\theta(x'/l)\cdot\nabla'g_{\delta}(x,t) +\frac{1}{l^2}\Delta\theta(x'/l)g_{\delta}(x,t).
\end{aligned}
\end{equation}
Therefore, by taking $\,l\to \infty$ the $L^2(dxdt)$-norm of the the last two terms on the r.h.s. of \eqref{paso2} tend to zero. Hence, inserting this in \eqref{paso1} we obtain the same estimate for $g_{\delta}$.
Next, we have that
$$
(i\partial_t+\Delta) g_{\delta}=(i\partial_t+\Delta) (h_{\delta}\ast g) =h_{\delta}\ast (i\partial_t+\Delta) g.
$$
 Using the supremum in $\delta$ (non-isotropic maximal function) and its boundedness, together with the boundedness of the support in $(x_1,t)$ of $g$, so that
 $$
 e^{\sigma|x_1/R+\varphi(t)|^2}\leq c_{\sigma, R},
 $$ 
by the dominated convergence theorem we can pass to the limit as $\delta \to 0$ to obtain the desired result.
\end{proof}

\section{Proof of Theorem \ref {Theorem1}}
\label{PW}
We divide our argument into six steps:
\vskip.05in
\underline{Step 1}: We claim that
\begin{equation}
\label{step1}
\sup_{0\leq t\leq 1}\int_{\R^n}\,e^{2a_1 x_1}|u(x,t)|^2dx\leq A_3.
\end{equation}

\underline{Proof of Step 1} : Using \eqref{cond5} in Theorem \ref{Theorem1} we choose $\rho$ so large such that 
$$
\| V\,\chi_{\{|x|\geq \rho\}}\,\|_{L^1([0,1]:L^{\infty}(\R^n))}\leq \epsilon_n,
$$
with $\epsilon_n$ as in Lemma \ref{Lemma1}. From \eqref{cond1}-\eqref{cond3} we have
$$
\int_{\R^n}\;e^{2a_1 x_1}|u(x,0)|^2dx\leq A_2,
$$
and
$$
\int_{\R^n}\;e^{2a_1 x_1}|u(x,1)|^2dx\leq A_1+e^{2a_1 m}\leq A_1+e^{2a_1}.
$$

We apply Lemma \ref{Lemma1}, with $\mathbb G(x,t)= - \chi_{\{|x|\leq \rho\}}\, V(x,t) u(x,t)$, using that
$$
\int_0^1\|  \,e^{a_1 x_1}\chi_{\{|x|\leq \rho\}}\, V\, u\|_2dt\leq  e^{a_1\rho}M_0 A_1,
$$
which gives step 1 with $A_3=A_3(A_1;A_2;a_1;M_0;\rho)$.
\vskip.1in

\underline{Step 2}: Define $\delta>0$ as
\begin{equation}
\label{defdelta}
\delta=\frac{\epsilon_n}{M_0+1},
\end{equation}
with $M_0$ as in \eqref{cond4} and $\epsilon_n$ as in Lemma \ref{Lemma1}. Note that $\delta<1$, and 
\begin{equation}
\label{step2a}
\int_{1-\delta}^1\| V(\cdot,t)\|_{\infty}dt\leq \epsilon_n.
\end{equation}
Let
\begin{equation}
\label{defv}
v(x,t)=u(\delta^{1/2}x,\delta t+1-\delta).
\end{equation}
We shall show that under the hypothesis of Theorem \ref{Theorem1} 
\begin{equation}
\label{goal2}
\int_{\tfrac{m}{2\delta^{1/2}}<x_1<\tfrac{m}{\delta^{1/2}}}|v(x,1)|^2dx=\int_{\tfrac{m}{2}<x_1<m}|u(x,1)|^2dx=0
\end{equation}
as desired.

Defining
\begin{equation}
\label{vdelta}
V_{\delta}(x,t)=\delta\, V(\delta^{1/2}x,\delta t +1-\delta)
\end{equation}
we see that $v(x,t)$ satisfies the equation
$$
\partial_t v=i(\Delta v+V_{\delta} v),\;\;\;\;\;\;(x,t)\in\R^n\times [0,1].
$$
We notice, using \eqref{step2a}, that
\begin{equation}
\label{est1}
\| V_{\delta}\|_{L^{\infty}(\R^n\times [0,1])}\leq M_0\,\delta\leq \epsilon_n,\;\;\;\;\;\;\;\int_0^1\| V_{\delta}(\cdot,t)\|_{\infty}dt\leq \epsilon_n,
\end{equation}
and 
$$
\int_{\R^n}|v(x,t)|^2dx=\frac{1}{\delta^{n/2}}\int_{\R^n}|u(y,\delta t+1-\delta)|^2dy\leq\frac{A_1}{\delta^{n/2}},
$$
with 
$$
supp (v(\cdot,1))\subset\{x_1\leq m/\delta^{1/2}\}.
$$
Thus, from \eqref{step1}
$$
\int_{\R^n} e^{2a_1x_1\delta^{1/2}}|v(x,0)|^2dx=\int_{\R^n}e^{2a_1x_1\delta^{1/2}}
|u(\delta^{1/2}x,1-\delta)|^2dx\leq\frac{A_3}{\delta^{n/2}}.
$$
We remark that $\delta$ was fixed in \eqref{defdelta} (independent of $m$), and that we can still choose $m $ small.

\vskip.1in
\underline{Step 3}: Using the Appell (conformal) transformation Lemma \ref{lemma2} we have that if 
$$
\partial_sv=i(\Delta v +V_{\delta} v),\;\;\;\;\;(y,s)\in\R^n\times [0,1],
$$
then for any $\alpha,\,\beta>0$ 
\begin{equation}
\label{defvtilde}
\widetilde v(x,t)=\left(\tfrac{\sqrt{\alpha\beta}}{\alpha(1-t)+\beta
t}\right)^{\frac n2}v\left(\tfrac{\sqrt{\alpha\beta}\,
x}{\alpha(1-t)+\beta t}, \tfrac{\beta t}{\alpha(1-t)+\beta
t}\right)e^{\frac{\left(\alpha-\beta\right) |x|^2}{4i(\alpha(1-t)+\beta
t)}},
\end{equation}
verifies
$$
\partial_t \widetilde v=i(\Delta \widetilde v+\widetilde V \,\widetilde v),\;\;\;\;(x,t)\in\R^n\times [0,1],
$$
with
$$
\widetilde V(x,t)=\frac{\alpha \beta}{(\alpha(1-t)+\beta t)^2}\,
V_{\delta}\Big(\frac{\sqrt{\alpha \beta} x}{\alpha(1-t)+\beta t},\frac{\beta t}{\alpha(1-t)+\beta t}\Big).
$$

For $\lambda>0$ given we will choose $\alpha=\alpha(\lambda,\delta),\;\beta=\beta(\lambda,\delta)$. We recall that
$$
\| e^{a_1 x_1\delta^{1/2}}\,v(\cdot,0)\|_2^2\leq \frac{A_3}{\delta^{n/2}},
$$
and from the support hypothesis 
$$
\| e^{\lambda x_1}\,v(\cdot,1)\|_2^2\leq \frac{ e^{2 m \lambda/\delta^{1/2}} A_1}{\delta^{n/2}}.
$$

We want  $\gamma=\gamma(\lambda,\delta)$ such that
$$
\| e^{\gamma x_1}\widetilde v(x,0)\|_2=\| e^{\gamma(\alpha/\beta)^{1/2} x_1} v(x,0)\|_2=\| e^{a_1x_1\delta^{1/2}}v(\cdot,0)\|_2\leq \frac{A_3^{1/2}}{\delta^{n/4}},
$$
and
$$
\| e^{\gamma x_1}\widetilde v(x,1)\|_2=\| e^{\gamma(\beta/\alpha)^{1/2} x_1} v(x,1)\|_2=\| e^{\lambda x_1}v(\cdot,1)\|_2\leq \frac{e^{\lambda m/\delta^{1/2}} A_1^{1/2}}{\delta^{n/4}}.
$$

Thus, we choose
\begin{equation}
\label{alphabeta}
\gamma(\alpha/\beta)^{1/2}=\delta^{1/2}a_1,\;\;\;\;\;\;\gamma(\beta/\alpha)^{1/2}=\lambda,
\end{equation}
i.e.
\begin{equation}
\label{alphabeta2}
\gamma=(\lambda\delta^{1/2}a_1)^{1/2},\;\;\;\;\beta=\lambda,\;\;\;\alpha=\delta^{1/2}a_1.
\end{equation}

Next, using the  change of variable 
$$
\widehat t= \frac{\beta}{\alpha(1-t)+\beta t},\;\;\;\;\;\;\;\;\;\;d \widehat t=\frac{\alpha \beta}{(\alpha(1-t)+\beta t)^2}\,dt,
$$
it follows that
$$
\aligned
&\int_0^1\|\,\widetilde V(\cdot,t)\|_{\infty}dt\\
\\
&=\int_0^1\|\frac{\alpha \beta}{(\alpha(1-t)+\beta t)^2}
V_{\delta}(\frac{\sqrt{\alpha \beta} x}{\alpha(1-t)+\beta t},\frac{\beta t}{\alpha(1-t)+\beta t})\|_{\infty}dt\\
\\
&=\int_0^1\|V_{\delta}(\cdot,\widehat t)\|_{\infty}d \widehat t\leq \epsilon_n,
\endaligned
$$
using \eqref{est1}. So we can apply Lemma \ref{Lemma1} again, this time with $\mathbb G\equiv 0$, to obtain that
\begin{equation}
\label{est2}
\begin{aligned}
\sup_{0\leq t \leq 1}\| e^{\gamma x_1} \widetilde v(\cdot,t)\|_2 &\leq c_n \Big(\frac{A_3^{1/2}}{\delta^{n/4}}+\frac{A_1^{1/2}}{\delta^{n/4}}\,e^{\lambda m/\delta^{1/2}}\Big)\\
&\leq c_{\delta,a_1,A_1,A_3}\,e^{\lambda m/\delta^{1/2}}\\
&\leq c\,e^{\lambda m/\delta^{1/2}},
\end{aligned}
\end{equation}
if $\lambda >0$ is large and $A_1\neq 0$, (how large $\lambda $ is for this depends on $\,m, A_1, A_3$ and $\delta$, but this will not matter).
 Note  that
\begin{equation}
\label{uno1}
\| \widetilde v(\cdot,t)\|_2^2=\| v(\cdot,\frac{\beta t}{\alpha(1-t)+\beta t})\|_2^2\leq \frac{A_1}{\delta^{n/2}},
\end{equation}
hence
\begin{equation}
\label{est3}
\sup_{0\leq t\leq 1}\| \widetilde v(\cdot,t)\|_2\leq \frac{A_1^{1/2}}{\delta^{n/4}}.
\end{equation}

Now, we denote by  $\phi_0(x_1)\geq 0$  a $C^{\infty}$ convex function
such that
\begin{equation}
\label{weight}
\phi_0(x_1)=
\begin{cases}
\begin{aligned}
&0,\;\;\;\;\; \;\;\;\;\;\;\;\;\;\;x_1\leq 0,\\
&x_1-1/4,\;\;\;x_1\geq 1/2,
\end{aligned}
\end{cases}
\end{equation}
and define
$$
\phi(x_1)=(1+(\phi(x_1))^2)^{1/2}.
$$
Since $\gamma=(\lambda\delta^{1/2} a_1)^{1/2}$, from \eqref{est2} and large $\lambda$ we have
\begin{equation}
\label{est4}
\sup_{0\leq t\leq 1}\| e^{\gamma \phi(x_1)} \widetilde v(\cdot,t)\|_2\leq c_{\delta,a_1}\,e^{\lambda m/\delta^{1/2}}.
\end{equation}

A computation shows that
$$
\phi'(x_1)=\frac{\phi_0(x_1)\, \phi'_0(x_1)}{(1+(\phi_0(x_1))^2)^{1/2}},
$$
and 
$$
\phi^{''}(x_1)=\frac{(\phi'_0(x_1))^2}{(1+(\phi_0(x_1))^2)^{3/2}} + \frac{\phi_0(x_1)\, \phi^{''}_0(x_1)}{(1+(\phi_0(x_1))^2)^{3/2}}.
$$
Thus, for $x_1\geq 1/2$ one has that
\begin{equation}
\label{3stars}
\phi^{''}(x_1)\geq \frac{1}{4} \frac{1}{(1+x_1^2)^{3/2}} =\frac{1}{4}\frac{1}{\langle x_1\rangle^3}.
\end{equation}

We now follow an argument similar to that in \cite{EKPV08m} section 2.  Let
$$
f(x,t)=e^{\gamma \phi(x_1)}\,\widetilde v(x,t).
$$
Then $f$ verifies
\begin{equation}\label{E: loquefcumple}
\partial_tf=\mathcal Sf+\mathcal Af +i\,e^{\gamma\phi}F,\;\;\;\;
\text{in}\;\;\;\R^n\times[0,1],
\end{equation}
with symmetric and skew-symmetric operators $\mathcal S$ and $\mathcal A$
\begin{equation}
\label{E: formulaoperadores}
\begin{aligned}
\mathcal
S=&-i\gamma\left(2\partial_{x_1}\phi\,\partial_{x_1}+\partial_{x_1}^2\phi\right),\\
\mathcal A= &i\left(\triangle +\gamma^2|\partial_{x_1}\phi|^2\right).
\end{aligned}
\end{equation}
and
$$
F=\widetilde V\,\widetilde v.
$$
A calculation shows that,
\begin{equation}\label{E: formulaconmutadorindependientetiempo}
\mathcal S_t+\left[\mathcal S,\mathcal A\right]=- \gamma
\left[4\partial_{x_1}\,\phi^{''}\partial_{x_1}-4
\gamma^2\phi^{''}(\phi')^2+\phi^{(4)}\right].
\end{equation}

By Lemma 2 in \cite{EKPV08m}
\begin{equation}
\label{E: derivadasegunda}
\begin{aligned}
\partial_t^2H\equiv \partial_t^2\left( f, f\right)=& 2\partial_t\text{\it
Re}\left(\partial_tf-\mathcal Sf-\mathcal Af,f\right)+2\left(\mathcal
S_tf+\left[\mathcal S,\mathcal A\right]f,f\right)\\
&+\|\partial_tf-\mathcal Af+\mathcal Sf\|^2-\|\partial_tf-\mathcal
Af-\mathcal Sf\|^2,
\end{aligned}
\end{equation}
so
\begin{equation}
\label{E-b}
\begin{aligned}
\partial_t^2H& \geq  2\partial_t \text{\it Re}\left(\partial_tf-\mathcal
Sf-\mathcal Af,f\right)\\
&+2\left(\mathcal S_tf+\left[\mathcal S,\mathcal
A\right]f,f\right)-\|\partial_tf-\mathcal Af-\mathcal Sf\|^2.
\end{aligned}
\end{equation}

Multiplying \eqref{E-b} by $t(1-t)$ and integrating in $t$ we obtain
\begin{equation}
\label{EH}
\begin{aligned}
&2\int_0^1t(1-t)\left(\mathcal S_tf+\left[\mathcal S,\mathcal
A\right]f,f\right)dt\\
&\leq c_n\,\sup_{[0,1]}\|e^{\gamma\,\phi}
\,\widetilde v(t)\|_2^2+c_n\,\sup_{[0,1]}\|e^{\gamma\,\phi}
F(t)||_2.
\end{aligned}
\end{equation}
This computation can be justified by parabolic regularization using the
fact that we already know the decay estimate for $\,\widetilde v$, see \cite{EKPV08b}. 
Note that for $\lambda$ sufficiently large
\begin{equation}
\label{NN}
\| \widetilde V\|_{\infty}\leq \Big(\frac{\beta}{\alpha}\Big)\,\|V_{\delta}\|_{\infty}\leq \frac{\lambda}{\delta^{1/2}a_1}\,\delta M_0=\frac{\lambda \delta^{1/2} M_0}{a_1}.
\end{equation}

Hence, combining \eqref{est2}, \eqref{E: formulaconmutadorindependientetiempo}, and \eqref{NN} it follows that
\begin{equation}
\label{aa1}
\begin{aligned}
&8 \,\gamma \,\int_0^1\int \,t(1-t)\phi^{''}(x_1)\, |\partial_{x_1}f|^2  dx dt \\
\\
&+
8\, \gamma^3 \,\int_0^1\int \,t(1-t) \,\phi^{''}(x_1)\,(\phi'(x_1))^2 \,|f|^2dxdt\\
\\
&\leq c_n\,\gamma\,\sup_{[0,1]}\|f(\cdot,t)\|_2^2
+ c_{\delta,M_0,a_1,n}\,\lambda\,\sup_{[0,1]}\|f(\cdot,t)\|_2^2
+c_n\,\sup_{[0,1]}\|f(\cdot,t)\|_2^2\\
\\
&\leq c_{\delta,M_0,a_1,n}\,\lambda\,e^{2\lambda m/\delta^{1/2}}.
\end{aligned}
\end{equation}

We recall that
$$
\partial_{x_1} f= e^{\gamma\phi(x_1)}\,\partial_{x_1}\widetilde v+
\gamma\,e^{ \gamma\phi(x_1)}\,\phi'(x_1)\,\widetilde v,
$$
thus
$$
\aligned
&\gamma |\partial_{x_1}f|^2=\gamma e^{2\gamma\phi(x_1)}|\partial_{x_1}\widetilde v+\gamma \phi'(x_1)\widetilde v|^2\\
&=e^{2\gamma\phi(x_1)}(\gamma |\partial_{x_1}\widetilde v|^2 +2\gamma^2\phi'(x_1)\widetilde v\, \partial_{x_1}\widetilde v + \gamma^3(\phi'(x_1))^2|\widetilde v|^2),
\endaligned
$$
with 
$$
| 2\gamma^2\phi'(x_1)\widetilde v\, \partial_{x_1}\widetilde v|\leq \frac{1}{2}\gamma |\partial_{x_1}\widetilde v|^2 +2\gamma^3(\phi'(x_1))^2|\widetilde v|^2.
$$
Inserting these estimates in \eqref{aa1} for  $\lambda$ large one gets 
$$
4 \gamma\int_0^1\int\,t(1-t)\,\phi^{''}(x_1)
\,e^{2\gamma \phi(x_1)}\,|\partial_{x_1}\widetilde v|^2 dx dt
\leq c_{\delta,M_0,a_1,n}\,\lambda\,e^{2\lambda m/\delta^{1/2}}.
$$
Hence, for $x_1>1/2$ from \eqref{3stars} one has that
$$
 \gamma\int_0^1\int\,t(1-t)\frac{1}{\langle x_1\rangle^3}
\,e^{2\gamma \phi(x_1)}\,|\partial_{x_1}\widetilde v|^2 dx dt
\leq c_{\delta,M_0,a_1,n}\,\lambda\,e^{2\lambda m/\delta^{1/2}},
$$
for $\lambda$ large. Collecting the above information, \eqref{est2}, and \eqref{aa1}  we conclude that
\begin{equation}
\label{conclu1}
\begin{aligned}
&\sup_{0\leq t\leq 1}\| e^{\gamma \phi(x_1)} \widetilde v(\cdot,t)\|^2_2
+\gamma\int_0^1\int_{x_1>\tfrac{1}{2}}\,t(1-t)\frac{1}{\langle x_1\rangle^3}
\,e^{2\gamma \phi(x_1)}\,|\partial_{x_1}\widetilde v|^2 dx dt\\
&\leq c_{\delta,M_0,a_1,n}\,\lambda\,e^{2\lambda m/\delta^{1/2}}.
\end{aligned}
\end{equation}
\vskip.1in

\underline{Step 4 } : We will give lower bounds for
$$
\Phi=\int_{2\leq x_1\leq R/2}\,\int_{3/8}^{5/8} \,| \widetilde v(x,t)|^2dt dx,
$$
for $R$ large to be chosen.

First, we recall that
$$
\Phi=\int_{2\leq x_1\leq R/2}\,\int_{3/8}^{5/8} 
\Big|\left(\tfrac{\sqrt{\alpha\beta}}{\alpha(1-t)+\beta
t}\right)^{\frac n2}v\left(\tfrac{\sqrt{\alpha\beta}\,
x}{\alpha(1-t)+\beta t}, \tfrac{\beta t}{\alpha(1-t)+\beta
t}\right)\Big|^2dxdt.
$$
 Next, for $t\in[3/8,5/8]$ we see that
 $$
 s(t)=\frac{\beta\,t}{\alpha(1-t)+\beta t},
 $$
 satisfies that
 $$
 dt=\frac{(\alpha(1-t)+\beta t)^2}{\alpha \beta} ds\simeq \frac{\beta^2}{\alpha \beta}\, ds=\frac{\beta}{\alpha}\,ds,
 $$
 with 
 $$
 s(3/8)=\frac{3\beta}{5\alpha+3\beta}\in(1/2,1),
 $$
 and
 $$
 s(5/8)=\frac{5\beta}{3\alpha+5\beta}\in(1/2,1).
 $$
Therefore
$$
s(5/8)-s(3/8)=\frac{2\alpha \beta}{(5\alpha+3\beta)(3\alpha+5\beta)}\,\cong \frac{\alpha}{\beta},
$$
for large $\lambda$, and
$$
s(5/8)>s(3/8)\uparrow 1\;\; \;\;\;\text{as}\;\;\;\lambda \uparrow \infty.
$$

In the $x$-variable we have
$$
y=\frac{\sqrt{\alpha\beta}}{\alpha(1-t)+\beta}\,x,
$$
 so for $t\in[3/8, 5/8]$ and $2<x_1<R/2$ one basically has that
 $$
 y_1\in [2\sqrt{\tfrac{\alpha}{\beta}}, \tfrac{R}{2} \sqrt{\tfrac{\alpha}{\beta}}\,]\equiv A.
 $$
 
Thus,
 \begin{equation}
 \label{4stars}
 \Phi\geq c_n\,\frac{\beta}{\alpha}\,\int_{A}\int_{I_{\lambda}}|v(y,s)|^2ds dy,
\end{equation}
 with 
 $$I_{\lambda}=[s(3/8),s(5/8)],\;\;\;\;\;|I_{\lambda}|\cong \frac{\alpha}{\beta}\;\;\;\;\;\text{for}\;\;\;\;\lambda>>1,
 $$
 with 
 $$
 s(3/8)\to 1\;\;\;\text{as}\;\;\;\lambda\uparrow \infty.
 $$
 We choose
 \begin{equation}
 \label{choiceR}
 R=\frac{2 M \lambda^{1/2} m}{(\delta^{1/2}a_1)^{1/2}c_n},
 \end{equation}
 with
 \begin{equation}
 \label{choiceM}
\frac{2}{c_n} M\geq \frac{1}{\delta^{1/2}},
 \end{equation}
 to be fixed latter. Since
 $$
2 \sqrt{\frac{\alpha}{\beta}}=2\frac{\delta^{1/4}a_1^{1/2}}{\lambda^{1/2}}\to 0\;\;\;\;\text{as}\;\;\; \lambda\uparrow \infty,
 $$
 and
 $$
 \frac{R}{2}\sqrt{\frac{\alpha}{\beta}}=\frac{2M\,m}{c_n}\geq \frac{m}{c_n\delta^{1/2}}.
 $$
 Hence, from \eqref{4stars} we can conclude
 \begin{equation}
 \label{conclustep4}
\liminf_{\lambda\uparrow \infty} \Phi\geq  c_n\,\int_{0<y_1<\tfrac{m}{\delta^{1/2}}}\,|v(y,1)|^2dy. \end{equation}

\vskip.1in

\underline{Step 5} : Upper bounds for 
$$
\Xi(R)\equiv \int_{\tfrac{1}{2}<x_1<R}\;\int_{1/32}^{31/32}(|\widetilde v(x,t)|^2+
|\partial_{x_1} \widetilde v(x,t)|^2)dtdx.
$$

For the square of the $L^2$-norm of $\widetilde v$ we have the bound 
$A_1/\delta^{n/2}$, see \eqref{uno1}.
 For the square of the $L^2$-norm of $\partial_{x_1}\widetilde v$ using the conclusion of Step 3 \eqref{conclu1} we get the upper bound
 $$
 c_{\delta,M_0,a_1,n}\,(1+R^3)\,
 \lambda\,e^{2\lambda m/\delta^{1/2}}.
 $$
\vskip.1in

\underline{Step 6} : Carleman estimate \cite{Car} and conclusion of the proof.
\vskip.05in
We assume that for $m>0$ to be chosen
\begin{equation}
\label{defib}
b\equiv \int_{\tfrac{m}{2\delta^{1/2}}<y_1<\tfrac{m}{\delta^{1/2}}}\,|v(y,1)|^2dy>0.
 \end{equation}

We recall that 
$$
supp (v(\cdot,1))\subset \{y_1<m/\delta^{1/2}\}.
$$

From step 4 we have that for $\lambda$ sufficiently large
\begin{equation}
\label{CEK}
\int_{2\leq x_1\leq R/2}\;\int_{3/8}^{5/8}|\widetilde v(x,t)|^2dt dx\geq \frac{b}{2}.
\end{equation}

Now, let
$$
x_{0,1}=R/2,
$$
and $\varphi :[0,1]\to \R$ be a smooth function such that $0\leq \varphi(t)\leq 3/2-1/R$,
\begin{equation}
\label{defverphi}
\varphi(t)=
\begin{cases}
\begin{aligned}
&3/2-1/R,& t\in[3/8,5/8],\\
&0,&t\in [0,1/4]\cup[3/4,1],
\end{aligned}
\end{cases}
\end{equation}
with $\varphi,\,\varphi', \,\varphi^{''}$ uniformly bounded in $R$ for $R$ large.
 We fix 
 $$
 \sigma=cR^2,
 $$
 with $c$ denoting a universal constant whose value may change from line to line,
 so that Corollary \ref{Corollary1} applies. Chose $\theta_R\in C^{\infty}(\R)$, with 
 $0\leq \theta(x_1)\leq 1$ and
\begin{equation}
\label{defthetai1}
\theta_R(x_1)=
\begin{cases}
\begin{aligned}
&1,\;\;\;\;\;1<x_1<R-1,\\
&0,\;\;\;\;\,x_1<1/2\;\;\;\text{or}\;\;\;x_1>R.
\end{aligned}
\end{cases}
\end{equation} 
 Let $\zeta\in C^{\infty}(\R)$ satisfy 
 $0\leq \zeta(x_1)\leq 1$ and
\begin{equation}
\label{defthetai}
\zeta(x_1)=
\begin{cases}
\begin{aligned}
&0,\;\;\;\;x_1<1,\\
&1,\;\;\;x_1>1+1/(2R).
\end{aligned}
\end{cases}
\end{equation} 
Define
\begin{equation}
\label{defg}
g(x,t)\equiv \theta_R(x_1)\,\zeta\Big(\frac{x_1-R/2}{R}+\varphi(t)\Big)\,\widetilde v(x,t).
\end{equation}

Let us see that $g(x,t)$ verifies the hypotheses of Corollary \ref{Corollary1} so we can apply 
the inequality \eqref{cpde1}. First, it is clear that  it is supported on the set
$$
1/2<x_1<R,\;\;\;\;\;\;\;\;1/32<t<31/32,\;\;\;\;\;\;\;\;\;\;\Big|\frac{x_1-R/2}{R}+\varphi(t)\Big|\geq 1.
$$
 Below we shall see that
$$
(i\partial_t+\Delta)g\in L^2(dxdt).
$$

Note that 
\begin{equation}
\label{key1}
\text{if}\;\;\;\;3/2\leq x_1\leq R-1\;\;\;\;\;\text{and}\;\;\;\;\;3/8\leq t\leq 5/8,\;\text{then}\;\;\;\;g(x,t)=\widetilde v(x,t).
\end{equation}
In this domain, $\theta_R(x_1)\equiv 1$, and 
$$
\frac{x_1-R/2}{R}+\varphi(t)=\frac{x_1}{R}+1-\frac{1}{R}\geq 1+\frac{1}{2R},
$$
which gives \eqref{key1}.

 Also if $x_1>2$ one has $x_1/R+1-1/R\geq 1+1/R$, so that we have a
 lower bound $\Gamma$ for the left hand side of \eqref{cpde1} squared  with
 \begin{equation}
 \label{leftside}
\Gamma\equiv  \frac{\sigma^3}{R^4}\,e^{2\sigma(1+1/R)^2}\,\int_{2<x_1<R-1}\,\int_{3/8}^{5/8}|\widetilde v(x,t)|^2dtdx\geq \frac{b}{2}c^3R^2\,e^{2\sigma(1+1/R)^2},
 \end{equation}
 for $R$ large from \eqref{CEK}. The equation for $g$ is
 \begin{equation}
 \label{eqforg}
 \begin{aligned}
 &(i\partial_t+\Delta)g=\theta_R(x_1)\zeta\Big(\frac{x_1-R/2}{R}+\varphi(t)\Big)
 \widetilde V(x,t)\,\widetilde v\\
 &+\Big[\zeta\Big(\frac{x_1-R/2}{R}+\varphi(t)\Big)(2\theta_R'(x_1)\partial_{x_1}\widetilde v+\widetilde v\,\theta_R^{''}(x_1))\Big]\\
& +\Big[(i\zeta'(\cdot)\varphi'(t)
 +\zeta^{''}(\cdot)\frac{1}{R^2})\theta_R(x_1)\widetilde v+\frac{2}{R}\zeta'(\cdot)\theta_R(x_1)\partial_{x_1}\widetilde v\Big]\\
 &\equiv E_1+E_2+E_3.
 \end{aligned}
 \end{equation}
 
  Note that $\,(i\partial_t+\Delta)g\in L^2(dxdt)$ by Step 5.

 On the domain $1/2<x_1<R,\;1/32<t<31/32$ (which contains the support of $g$) one has  (see \eqref{alphabeta2})  
 $$
 \|\widetilde V\|_{\infty}\lesssim \frac{\alpha}{\beta}\, M_0\;\;\;\;\;\text{for}\;\;\;\;\lambda>>1.
 $$
 Thus, since
 $$
 \frac{\sigma^3}{R^4}=c^3R^2,
 $$
 for $R$ sufficiently large we can absorb the contribution of the term containing $E_1$ in the right hand side of \eqref{cpde1} in the left hand side of \eqref{cpde1}. So we have
 $$
 \aligned
 \frac{b}{2}\,c^3R^2e^{2\sigma(1+1/R)^2}&\leq c\int\int\,|E_2|^2\,e^{2\sigma|\frac{x_1-R/2}{R}+\varphi(t)|^2}dxdt\\
 &+c\int\int\,|E_3|^2\,e^{2\sigma|\frac{x_1-R/2}{R}+\varphi(t)|^2}dxdt.
\endaligned
 $$
  Next, we analyze the contribution of $E_2$. In this case, each term contains a factor equal to a derivative of  $\theta_R$,
  so the possible contribution are from the sets : \newline $1/2<x_1<1$ and $R-1<x_1<R$.
 If $1/2<x_1<1$, then 
 $$
 \frac{x_1-R/2}{R}+\varphi(t)\leq \frac{1}{R}-\frac{1}{2}+\frac{3}{2}-\frac{1}{R}=1,
 $$
 so in this domain
 $$
 \zeta\big( \frac{x_1-R/2}{R}+\varphi(t)\Big)\equiv 0.
 $$
   In the region $R-1<x_1<R$ we have
   $$
  \frac{x_1-R/2}{R}+\varphi(t)\leq1 -\frac{1}{2}+\frac{3}{2}-\frac{1}{R}=2-\frac{1}{R},
  $$
  so the contribution of the term involving $E_2$ is bounded above by
   $$
   \aligned
   &\int^{31/32}_{1/32}\int_{R-1<x_1<R}(|\widetilde v|^2+|\partial_{x_1}\widetilde v|^2)\,e^{2\sigma(1-1/R)^2}dxdt\\
   &=e^{2\sigma(2-1/R)^2}\;\int^{31/32}_{1/32}\int_{R-1<x_1<R}(|\widetilde v|^2+|\partial_{x_1}\widetilde v|^2)dxdt\\
   &=\Xi(R)\,e^{2\sigma(2-1/R)^2}.
   \endaligned
   $$
 
 Next, we consider the term involving $E_3$. In this case,  each term contains a factor equal to a  derivative of   $\zeta$ so its support is restricted to
 $$
 1\leq  \frac{x_1-R/2}{R}+\varphi(t)\leq 1+1/2R,
 $$
 with $1/2<x_1<R$ (support of $\theta_R$) and $t\in(1/32, 31/32)$. Hence, its contribution is bounded by (see \eqref{uno1})
 $$
\aligned
& c  R^4 \int_{1/32}^{31/32}\int_{\frac{1}{2}<x_1<R}|\widetilde v(x,t)|^2 e^{2\sigma(1+1/(2R))^2}dxdt\\
 &\leq c\,R^4\,e^{2\sigma(1+1/(2R))^2}  \int_{1/32}^{31/32}\int_{\frac{1}{2}<x_1<R}|\widetilde v(x,t)|^2
 dxdt \\
 &\leq c\,e^{2\sigma(1+1/(2R))^2} \;\frac{A_1}{\delta^{n/2}}\,R^4.
 \endaligned
 $$
 
 Collecting this information and using that $R$ is large we get
 \begin{equation}
 \label{aaa1}
\frac{b}{2} c^3R^2e^{2\sigma(1+1/R)^2}\leq c\,\Xi(R)e^{2\sigma(2-1/R)^2} +c_{A_1,\delta}e^{2\sigma(1+1/2R)^2} R^4.
\end{equation}
 Since $R$ is large the second term on the right hand side \eqref{aaa1} can be hidden on the left to get that
  \begin{equation}
 \label{aa2}
\frac{b}{4} c^3R^2e^{2\sigma(1+1/R)^2}\leq c\,\Xi(R)e^{2\sigma(2-1/R)^2}.
\end{equation}
 Now, since $\sigma=cR^2$ one has that
 $$
 2\sigma(1+\tfrac{1}{R})^2-2\sigma(2-\tfrac{1}{R})^2= - 6cR^2+12cR\geq -10 c R^2,
 $$
for $R$ large. Thus, from \eqref{aa2} it follows that 
\begin{equation}
\label{final1}
\frac{b}{4} c^3R^2e^{-10cR^2}\leq c\,\Xi(R).
\end{equation}
But using \eqref{conclu1}
\begin{equation}
\label{final2}
\begin{aligned}
\Xi(R)&=\int_{R-1<x_1<R}\,\int^{31/32}_{1/32}(|\widetilde v|^2+|\partial_{x_1}\widetilde v|^2)(x,t)dtdx\\
&= \int_{R-1<x_1<R}\,\int^{31/32}_{1/32}\,e^{2\gamma\phi(x_1)}\,
e^{-2\gamma\,\phi(x_1)}(|\widetilde v|^2+|\partial_{x_1}\widetilde v|^2)(x,t)dtdx\\
&\leq ce^{-\gamma R}R^3\lambda\,e^{2\lambda m/\delta^{1/2}}
\leq c R^3  e^{-\gamma R} e^{2\lambda m/\delta^{1/2}},
\end{aligned}
\end{equation}
for $\lambda >>1$ and $x_1>R>>1$ one has that
$$
\frac{1}{2} x_1\leq \phi(x_1)\leq x_1.
$$
 Thus, inserting \eqref{final2} into \eqref{final1} it follows that
\begin{equation}
\label{final3}
b\leq c\,e^{10cR^2-\gamma R+3\lambda m/\delta^{1/2}},
\end{equation}
where
$$
R=\frac{2M\lambda^{1/2}m}{(\delta^{1/2}a_1)^{1/2}c},\;\;\;\;\;\gamma=(\lambda\delta^{1/2}a_1)^{1/2},\;\;\;\;\;\;\;\text{and}\;\;\;\;\;\frac{2}{c_n}M\geq 1/\delta^{1/2}.
$$
So we have, changing $c_n$ into  $c$,
$$
\aligned
&10cR^2-\gamma R+3\lambda m/\delta^{1/2}\\
&=\frac{40cM^2m^2\lambda}{c^2\delta^{1/2}a_1}-(\lambda\delta^{1/2}a_1)^{1/2}
\frac{2M\lambda^{1/2}m}{(\delta^{1/2}a_1)^{1/2}c} +3\frac{\lambda m}{\delta^{1/2}}\\
&=\lambda\Big(\frac{40cM^2m^2}{c^2\delta^{1/2}a_1}-\frac{2M m}{c}+\frac{3m}{\delta^{1/2}}\Big).
\endaligned
$$
We need the expression in parenthesis to be negative, i.e.
$$
\frac{40cM^2m^2}{c^2\delta^{1/2}a_1}+\frac{3m}{\delta^{1/2}}< \frac{2M m}{c}.
$$
Divide by $\,M m\,$, we need 
$$
\frac{40 c M m}{c^2 \delta^{1/2} a_1} +\frac{3}{M \delta^{1/2}} <  \frac{2}{c}.
$$
 First, we choose $M$ so large such that $2M/c\geq 1/\delta^{1/2}$ and
 $$
 \frac{3}{M\delta^{1/2}}\leq \frac{1}{c}.
 $$
 So now we just need
 $$
 \frac{40cM m}{c^2\delta^{1/2}a_1}<\frac{1}{c}.
 $$
 
 This can be done by taking $m>0$ small.
  Therefore, we have proved that $b=0$, which yields the desired result.
  
%%%%%%
%%%%%
%\end{section}
%\end{document}

 %\end{section}
%\end{Appendix}
%%%%%%%%%%%%

\end{document}